\newcommand \fk[1]{{{\mathfrak #1}}}
\newcommand \C[1]{{\mathcal #1}}
\newcommand \ovl[1]{{\overline {#1}}}
\newcommand \bb[1]{{\mathbb #1}}
\newcommand\fg{\mathfrak g}
\newcommand \bA{{\mathbb A}}
\newcommand \bC{{\mathbb C}}
\newcommand \bH{{\mathbb H}}
\newcommand \bR{{\mathbb R}}
\newcommand \bZ{{\mathbb Z}}
\newcommand\ue{{\underline e}}
\newcommand\uh{{\underline h}}
\newcommand\cha{{\check \alpha}}
\newcommand\CB{{\C B}}
\newcommand\ie{{\it i.e.~}}
\newcommand\eg{{\it e.g.~ }}
\newcommand\etc{{\it etc.~ }}
\newcommand\ep{{\epsilon}}
\newcommand\la{{\lambda}}
\newcommand\om{{\omega}}
\newcommand\al{{\alpha}}
\newcommand\sig{{\sigma}}
\newcommand\fh{{\mathfrak h}}
\newcommand\sgn{\mathsf{sgn}}
\newcommand\gen{\mathsf{gen}}
\newcommand\Wt{\mathsf{Wt}}
\newtheorem{proposition}{Proposition}[subsection]
\newtheorem{corollary}[proposition]{Corollary}
\newtheorem{lemma}[proposition]{Lemma}
\newtheorem{theorem}[proposition]{Theorem}
\theoremstyle{definition}
\newtheorem{definition}[proposition]{Definition}
\newtheorem{remark}[proposition]{Remark}
\newtheorem{example}[proposition]{Example}
\newcommand{\clrblu}{\color{blue}}
\newcommand{\clrr}{\color{red}}
\newcommand\End{\operatorname{End}}
\newcommand\Id{\operatorname{Id}}
\newcommand\Ad{\operatorname{Ad}}
\newcommand\ad{\operatorname{ad}}
\newcommand\cc{\textsf{cc}}
\newcommand\St{{\mathsf{St}}}
\numberwithin{equation}{subsection}
\newfont{\lge}{cmmi10 scaled 1640}
\begin{document}



\bigskip
\title{Ladder representations of $GL(n,\mathbb Q_p)$} 
\author{Dan Barbasch}
       \address[D. Barbasch]{Dept. of Mathematics\\
               Cornell University\\Ithaca, NY 14850}
       \email{barbasch@math.cornell.edu}

\author{Dan Ciubotaru}
        \address[D. Ciubotaru]{Mathematical Institute, University of Oxford, Oxford, OX2 6GG}
        \email{dan.ciubotaru@maths.ox.ac.uk}

\dedicatory{To David with admiration}

\begin{abstract}
In this paper, we recover certain known results about the ladder
representations of $GL(n,\mathbb Q_p)$ defined and studied by Lapid, M\'
inguez, and Tadi\'c.  We work in the equivalent setting of graded Hecke algebra
modules. Using the Arakawa-Suzuki functor from category $O$ to graded
Hecke algebra modules, we show that the determinantal formula proved
by Lapid-M\'inguez and Tadi\'c  is a direct
consequence of the BGG resolution of finite dimensional simple
$gl(n)-$modules. We make a connection between the semisimplicity of
Hecke algebra modules, unitarity with respect to a certain hermitian
form, and ladder representations. 
\end{abstract}

\maketitle

\setcounter{tocdepth}{1}
\tableofcontents

\section{Introduction} 
{In this paper we study  a class of representations of the graded affine Hecke algebra which are unitary for a star operation which we call $\bullet$. The $\bullet$-unitary dual for type $A$ is determined completely. In this case, the unitary dual corresponds via the Borel-Casselmann equivalence of categories \cite{Bo} composed with the reduction to the affine graded Hecke algebra of \cite{L1} to the \textit{ladder representations} defined and studied in \cite{LM} and \cite{Ta} for 
$GL(n,\bb Q_p)$.}

\medskip
The classification of the unitary dual of real and $p$-adic reductive groups is
one of the central problems of representation theory. Typically, by
results of Harish-Chandra, this
problem is reduced to an algebraic one, the study of 
\textit{admissible representations}
of an algebra endowed with a \textit{star operation}. In the case of
real groups, this algebra is the enveloping algebra, in the case of
$p$-adic groups, an Iwahori-Hecke type algebra with parameters. In both
cases, the  star operation is derived from the antiautomorphism 
$g\mapsto g^{-1}$. In the real case, David Vogan and his collaborators
\cite{Vetal} make a deep study of signatures of hermitian forms of
admissible modules by exploiting the relationship between two
different star operations, one related to the real form of the
reductive group, the other related to the compact form of the group. 
Motivated by this, we study the analogues of these star
operations for the graded affine Hecke algebras. The star
operation coming from the $p$-adic group is made explicit in
\cite{BM2}. In \cite{BC-sanya}, we introduce and study another star
operation which we denote by $\bullet$, the analogue of the star
operation for a compact form. The problem of the unitarity
of representations for $\bullet$ seemed an artificial one. However,
the results of Opdam \cite{Op}, {and more recently Oda \cite{Od}}, show that \textit{spherical representations}
of graded affine Iwahori-Hecke algebras play an important role in
harmonic analysis of symmetric spaces of compact type. Motivated by
this result, we initiated a systematic study of $\bullet-$unitary
representations. This is the topic of this paper. 

\bigskip
The first set of results is a connection between $\bullet-$unitary
representations, and representations which are $\bb
A-$semisimple. This is the content of  Propositions \ref{p:unit-ss}
and \ref{p:ssconverse}. This provides a connection to the work of
\cite{Ch},\cite{KR},  and \cite{Ra}. 

\medskip
In ongoing research we are planning to
determine the entire $\bullet-$unitary dual for graded affine Hecke algebras of arbitrary
type. The most complete results to date are for type A. In the
process we found the links to the {ladder representations} in
the title, and the results \cite{LM}, \cite{CR}, and \cite{Ta}. 

\medskip
A seminal idea, pioneered by D. Vogan, was to try to make a connection between
the unitary dual of real and $p$-adic groups via intertwining
operators, via \textit{petite} $K-$types and $W-$types. 
This was developed systematically by the authors of this
paper, jointly and separately, in particular to determine the full
spherical unitary dual of split $p$-adic (and split real classical) groups.
We follow this approach in this paper. We relate the $\bullet-$unitary
(star for the compact form of the Lie algebra) dual of Verma modules
to the $\bullet-$unitary dual of the graded affine Hecke algebra using the
functors introduced by Arakawa and Suzuki, \cite{AS,Su}.  The advantage of this
method is that it provides interesting connections between the Bernstein-Gelfand-Gelfand
resolution and results about character formulas of  {ladder
  representations}.     

\medskip
Some time ago, motivated by conjectures of Arthur concerning unipotent
representations, D. Barbasch, S. Evens, and A. Moy
conjectured the existence of an action of the Iwahori-Hecke
algebra  on the cohomology of the incidence variety of a pair of
nilpotent elements whose $sl(2)-$triples commute (the conjectures
actually referred to more general pairs). In section 5 we provide
evidence for this conjecture, by establishing connections to the
work of \cite{Gi} and \cite{EP}.  

\medskip

\noindent{\bf Acknowledgements.}  The first author was partially supported by NSF grants
  DMS-0967386, DMS-0901104 and an NSA-AMS grant. The second author was
  partially supported by NSF DMS-1302122 and NSA-AMS 111016. The
  authors thank Eitan Sayag for interesting discussions about ladder
  representations.

\section{The $\bullet$ star operation}\label{sec:1}

\subsection{Graded affine Hecke algebra}

Let $\Phi=(V,R,V^\vee, R^\vee,\Pi)$ be a  reduced based $\bR$-root system . Let
$W\subset GL(V)$ be the Weyl group generated by the simple reflections
$\{s_\al: \al\in\Pi\}.$
The positive roots are $R^+$ and the positive coroots are $R^{\vee,+}.$ The complexifications of $V$ and $V^\vee$ are denoted by
$V_\bC$ and $V^\vee_\bC$, respectively, and we denote by $\bar{\ }$
the complex conjugations of $V_\bC$ and $V^\vee_\bC$ induced by $V$
and $V^\vee$, respectively. 

Let $k: \Pi\to \bR_{>0}$ be a function such that $k_\al=k_{\al'}$ whenever
$\al,\al'\in \Pi$ are $W$-conjugate. Let $\bC[W]$ denote the group
algebra of $W$ and $S(V_\bC)$ the symmetric algebra over $V_\bC.$ The
group $W$ acts on $S(V_\bC)$ by extending the action on $V.$ For every
$\al\in \Pi,$  denote the difference operator by
\begin{equation}\label{e:diffop}
\Delta: S(V_\bC)\to S(V_\bC),\quad
\Delta_\al(a)=\frac{a-s_\al(a)}{\al},\text{ for all }a\in S(V_\bC).
\end{equation}
{If $a\in V_\bC$, then $\Delta_\al(a)=\langle a,\al^\vee\rangle$.}
\begin{definition}[\cite{L1}]\label{d:graded}
The graded affine Hecke algebra $\bH=\bH(\Phi,k)$  is the unique
associative unital algebra generated by $\{a: a\in S(V_\bC)\}$ and
$\{t_w: w\in W\}$ such that  
\begin{enumerate}
\item[(i)] the assignment $t_wa\mapsto w\otimes a$ gives an
isomorphism  of $(\bC[W],S(V_\bC))-$bimodules between 
$\bH$ and $\bC[W]\otimes S(V_\bC)$; 
\item[(ii)] $a t_{s_\al}=t_{s_\al}s_\al(a)+k_\al \Delta_\al(a),$
  for all $\al\in \Pi$, $a\in S(V_\bC).$
\end{enumerate}
\end{definition}

{The center of $\bH$ is $S(V_\bC)^W$ (\cite{L1}). By Schur's Lemma, the center of $\bH$
acts by scalars on each irreducible $\bH$-module. The central
characters are parameterized by $W$-orbits in $V_\bC^\vee.$ If $X$ is
an irreducible $\bH$-module, denote by $\cc(X)\in V_\bC^\vee$
(actually in $ W\backslash
V_\bC^\vee$) its central character. 
}

\subsection{Star operations}
Let $w_0$ denote the long Weyl group element, and let $\delta$ be the
involutive automorphism of $\bH$ determined by
\begin{equation}\label{autom}
\delta(t_w)=t_{w_0 w w_0},\ w\in W,\quad \delta(\omega)=-w_0(\omega),\
\omega\in V_\bC.
\end{equation}
When $w_0$ is central in $W$, $\delta=\Id$.

\begin{definition}
Let $\kappa:\bH\to \bH$ be a conjugate linear involutive algebra
anti-automorphism. An $\bH$-module $(\pi,X)$ is said to be
$\kappa$-hermitian if $X$ has a hermitian form $(~,~)$ which is
$\kappa$-invariant, \ie, 
$$
(\pi(h)x,y)=(x,\pi(\kappa(h))y),\quad x,y\in X,\  h\in\bH.
$$
A hermitian module $X$ is $\kappa$-unitary if the $\kappa$-hermitian
form is positive definite. 
\end{definition}

\begin{definition}\label{d:basicstars}
Define a conjugate linear algebra anti-involution $\star$ of $\bH$ by
\begin{align}\label{e:star}
t_{w}^\star=t_{w^{-1}},\ w\in W,\quad \omega^\star={\ovl{\Ad
  t_{w_0}(\delta (a))}}=-t_{w_0}\cdot
\overline{w_0(\omega)}\cdot t_{w_0},\ \omega\in V_\bC. 
\end{align}
Similarly define $\bullet$ by
\begin{align}\label{e:bullet}
t_{w}^\bullet=t_{w^{-1}},\ w\in W,\quad
\omega^\bullet=\overline\omega,\ \omega\in V_\bC. 
\end{align}
\end{definition}
The operations $\star$ and $\bullet$ are related by
\begin{equation}\label{e:relstar}
 {\star=\Ad t_{w_0}\circ\bullet\circ\delta},
\quad \text{ for all } h\in\bH.
\end{equation}

\begin{remark}
In \cite{BC-sanya}, it is proved that $\star$ and $\bullet$ are the
only star operations of $\bH$ that satisfy certain natural
conditions. 
{When $\bH$ is obtained by grading
the Iwahori-Hecke algebra of a reductive $p$-adic group,
$\star$ corresponds to the natural star 
operation of the $p$-adic group}. The
operation $\bullet$ is the analogue of the compact star operation
defined for real reductive groups in  \cite{Vetal}. 
\end{remark}

\subsection{Semisimplicity}
In this section, suppose the parameters $k_\al$ are positive, but
arbitrary.
Let $(\pi,X)$ be a finite dimensional $\bH$-module. For every $\la\in
V_\bC^*$, define
\begin{equation}
\begin{aligned}
X_\la&=\{ x\in X: \pi(\om)x=(\om,\la)x,\ \text{for all }\om\in
V_\bC\},\\
 X_\la^\gen&=\{ x\in X: (\pi(\om)-(\om,\la))^nx=0 \text{ for some
 }n\in\mathbb N, \text{ for all }\om\in
V_\bC\}.\\
\end{aligned}
\end{equation}
A functional $\la\in V_\bC^*$ is called a weight of $X$ if $X_\la\neq
0.$ Let $\Wt(X)$ denote the set of weights of $X$. It is straightforward that $\Wt(X)\subset
W\cdot \cc(X).$
\begin{definition}\label{def:ss}
The module $(\pi,X)$ is called $\bA$-semisimple if 
$X_\la=X_\la^\gen$ for all $\la.$
\end{definition}


\begin{proposition}\label{p:unit-ss}
Assume $(\pi,X)$ is a $\bullet$-unitary $\bH$-module. Then $X$
is $\bA$-semisimple.
\end{proposition}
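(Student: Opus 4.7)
The plan is to show that the action of the real vector space $V$ on $X$ consists of commuting self-adjoint operators with respect to the positive definite $\bullet$-hermitian form, and then invoke the standard linear algebra fact that such a family is simultaneously diagonalizable.

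First, I would unpack the definition of $\bullet$. For $\omega\in V\subset V_\bC$ (the real form), we have $\omega^\bullet = \overline{\omega} = \omega$. Hence for any $\omega\in V$ and $x,y\in X$,
\begin{equation*}
(\pi(\omega)x,y) = (x,\pi(\omega^\bullet)y) = (x,\pi(\omega)y),
\end{equation*}
so $\pi(\omega)$ is self-adjoint with respect to the positive definite hermitian form $(\,,\,)$. A self-adjoint operator on a finite dimensional positive definite inner product space is diagonalizable (with real eigenvalues).

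Next I would observe that the operators $\{\pi(\omega):\omega\in V\}$ pairwise commute, since $V$ is contained in the commutative subalgebra $S(V_\bC)\subset \bH$. A commuting family of diagonalizable operators on a finite dimensional vector space is simultaneously diagonalizable, so there is a basis of $X$ consisting of common eigenvectors for all $\pi(\omega)$, $\omega\in V$. Extending by $\bC$-linearity to $V_\bC$, each such basis vector is a joint eigenvector for all $\pi(\omega)$ with $\omega\in V_\bC$. Thus $X=\bigoplus_\la X_\la$, which forces $X_\la^{\gen}=X_\la$ for every $\la\in V_\bC^*$, proving that $X$ is $\bA$-semisimple.

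There is no real obstacle here; the argument is almost immediate once the $\bullet$-operation is written out on $V_\bC$. The only point that deserves a line of justification is simultaneous diagonalizability of commuting diagonalizable operators, and this is a classical fact of linear algebra that requires no hypotheses beyond those already in place.
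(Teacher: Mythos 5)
Your proof is correct and rests on the same mechanism as the paper's: the positive definiteness of the $\bullet$-invariant form forces the commutative algebra $\bA$ to act semisimply. The paper phrases this as an induction on orthogonal complements of weight vectors, while you phrase it as simultaneous diagonalization of the commuting self-adjoint operators $\pi(\omega)$, $\omega\in V$; these are two standard packagings of the same linear-algebra fact, and both use the finite dimensionality of $X$ assumed at the start of that section.
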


\begin{proof}
Let $(~,~)_X$ be the positive definite $\bullet$-form on $X$. Let $\la$ be a
weight of $X$ and $x_\la\neq 0$ a weight
vector. Define $$\{x_\la\}^\perp=\{y\in X: (x_\la,y)_X=0\}.$$
Let $y\in \{x_\la\}^\perp$ be
given. Since $$0=(\al,\la)(x,y)_X=(\pi(\om)x_\la,y)_X=(x_\la,\pi(\om^\bullet)y)_X=(x_\la,\pi(\overline\om)y),\quad
\om\in V_\bC,$$
it follows that $\{x_\la\}^\perp$ is $\bA$-invariant. Since the form
$(~,~)_X$ is positive definite, we have $X=\bC x_\la\oplus
\{x_\la\}^\perp$ as $\bA$-modules. By induction, it follows that $X$
is a direct sum of one-dimensional $\bA$-modules, thus $\bA$-semisimple.
\end{proof}

\begin{remark}
The above proposition can also be interpreted as the following linear
algebra statement:  if $J$ is a hermitian matrix, and $N$ a nonzero
  nilpotent matrix such that
\[
JN=N^*J,
\]
then $J$ is not positive definite.
\end{remark}

\begin{remark}
The proof and statement of Proposition
\ref{p:unit-ss} can be easily generalized by replacing $\bA$ with any
parabolic subalgebra of $\bH.$
\end{remark}
{
\begin{proposition}[\cite{BC}]
  \label{p:ssconverse}
Assume the central character $\chi$  of $\pi$ satisfies
$$|\langle\chi,\cha \rangle|\ge 1,$$ for all simple roots $\al.$ If $\pi$ is
$\bA-$semisimple, it is $\bullet-$unitary.
\end{proposition}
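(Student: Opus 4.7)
The plan is to build a positive definite $\bullet$-hermitian form on $X$ explicitly, using the $\bA$-semisimple decomposition together with Lusztig-style intertwiners to transport positivity across the $W$-orbit. (First note that $\bullet$-hermiticity requires the central character to be real, so one takes the dominant representative $\chi\in V^\vee$.)

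I would begin by decomposing $X=\bigoplus_{\lambda\in W\chi}X_\lambda$. Applying $\bullet$-invariance to weight vectors, $(\pi(\om)x_\lambda,y_\mu)=(x_\lambda,\pi(\overline\om)y_\mu)$, and comparing the dependence on $\om\in V_\bC$ forces $(x_\lambda,y_\mu)=0$ unless $\mu=\overline\lambda$. With $\chi$ real all weights are real, so distinct weight spaces are automatically $\bullet$-orthogonal, and the problem reduces to specifying a positive form on each $X_\lambda$ subject to $W$-compatibility.

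Next, I would introduce Lusztig intertwiners $\tau_\alpha=t_{s_\alpha}+k_\alpha/\alpha$ for each simple root $\alpha$, acting on weight vectors with $\langle\lambda,\check\alpha\rangle\ne 0$. These satisfy $\tau_\alpha(X_\lambda)\subset X_{s_\alpha\lambda}$ and $\tau_\alpha^2 v_\lambda=(1-k_\alpha^2/\langle\lambda,\check\alpha\rangle^2)v_\lambda$. Using $\alpha^\bullet=\alpha$ (since $\alpha\in V$ is real), the $\bullet$-invariance identity yields the recursion
\[
(\tau_\alpha v_\lambda,\tau_\alpha v_\lambda)=\Bigl(1-\frac{k_\alpha^2}{\langle\lambda,\check\alpha\rangle^2}\Bigr)(v_\lambda,v_\lambda).
\]
Normalizing $(v_\chi,v_\chi)>0$ at the dominant weight and iterating along a reduced expression for each $w\in W$ then determines the form throughout $X$ as a product of transition scalars.

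For positivity, the hypothesis $|\langle\chi,\check\alpha\rangle|\ge 1$ on simple coroots, combined with the fact that every positive coroot is a non-negative integer combination of simple coroots, yields $|\langle w\chi,\check\beta\rangle|\ge 1$ for every $w\in W$ and every simple $\beta$: indeed $w^{-1}\check\beta$ is (up to sign) a positive coroot. Hence every transition scalar is non-negative (matching the normalization behind the bound $1$), the extended form is positive semi-definite, and irreducibility of $X$ forces its radical to be $0$ or $X$; the latter is excluded by $(v_\chi,v_\chi)>0$.

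The main obstacle I expect is well-definedness of this construction: the values $(v_\lambda,v_\lambda)$ produced by iteration must be independent of the chosen reduced decomposition, which reduces to verifying that the $\tau_\alpha$'s satisfy the braid relations compatibly with the $\bullet$-normalization. A secondary subtlety is the boundary case in which a transition scalar vanishes; there one must argue either that such weights do not arise for an irreducible $\bA$-semisimple $X$ satisfying the hypothesis, or that the resulting positive semi-definite form still has trivial radical by irreducibility.
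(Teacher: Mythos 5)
You should first note that the paper does not prove Proposition \ref{p:ssconverse} at all: it quotes it from \cite{BC} and explicitly calls it ``more difficult'' than its converse, so your attempt can only be judged on its own merits. Your strategy --- the $\bA$-weight decomposition, $\bullet$-orthogonality of distinct real weight spaces, and transport of the form along the $W$-orbit by normalized intertwiners satisfying $\tau_\al^\bullet=\tau_\al$ and $\tau_\al^2=1-k_\al^2/\langle\la,\cha\rangle^2$ --- is the standard and correct one, and it genuinely closes in the strictly regular case, i.e.\ when the dominant representative satisfies $\langle\chi,\cha\rangle>k_\al$ for every simple $\al$: then every $\tau_\al$ is invertible between weight spaces, the weight support must be connected under these moves (otherwise $X$ would contain a proper subspace stable under $\bA$ and all $t_{s_\al}$), and every transition scalar is strictly positive. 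Three small repairs: with the paper's relation $a t_{s_\al}=t_{s_\al}s_\al(a)+k_\al\Delta_\al(a)$ the intertwiner is $t_{s_\al}-k_\al/\al$, not $+$ (your sign gives $\tau_\al^2=1+3k_\al^2/\al^2$); the bound $1$ tacitly assumes $k_\al\equiv 1$, otherwise it must read $k_\al$; and your deduction that $|\langle w\chi,\chb\rangle|\ge 1$ requires the simple values $\langle\chi,\cha_i\rangle$ to have a common sign, while passing to the dominant representative can destroy the stated hypothesis (mixed signs can cancel on non-simple roots), so the hypothesis must be read as a condition on the dominant representative from the start.

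The genuine gap is the boundary case, which you flag but do not resolve and which is precisely the content of the cited result. The hypothesis allows $\langle\chi,\cha\rangle=1=k_\al$ for some simple $\al$ (already $\chi=\rho$, the basic integral regular character); there the transition scalar vanishes, $\tau_\al$ is not invertible, the weight-support graph may a priori split into several components, your normalization controls the form on only one of them, and the ``positive semidefinite plus zero radical'' argument is unavailable because semidefiniteness has not been established off that component. What is missing is the structural lemma that in an irreducible $\bA$-semisimple module one cannot have $X_\la\ne 0$ and $X_{s_\al\la}\ne 0$ with $|\langle\la,\cha\rangle|=k_\al$ (this is the Kriloff--Ram ``local region'' analysis, and in type $A$ is visible in Cherednik's construction, where admissible transpositions between standard tableaux always have $|\langle\la,\cha\rangle|\ge 2$). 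Granting that lemma, every edge of the weight graph carries an invertible intertwiner with strictly positive scalar and your argument closes; without it the proof is incomplete. Two related defects: your base point can be empty, since $X_\chi$ at the dominant $\chi$ may vanish (the Steinberg module of the $sl(2)$ Hecke algebra has only the antidominant weight), forcing you to normalize at an arbitrary occurring weight and making the connectivity question unavoidable; and when $\dim X_{w\chi_0}>1$ the recursion $(\tau_w v,\tau_w v)=c_w(v,v)$ controls the form on all of $X_{w\chi_0}$ only once $\tau_w X_{\chi_0}=X_{w\chi_0}$, which is again the same invertibility issue.
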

This is a converse to Proposition \ref{p:unit-ss} and more difficult. 
We refer to \cite{BC} for a proof. Notice that, in particular,
Propositions \ref{p:unit-ss} and \ref{p:ssconverse} imply that at
integral central character $\chi$, \ie
$\langle\chi,\al^\vee\rangle\in\bZ$ for all roots $\al$, a simple
$\bH$-module is $\bullet$-unitary if and only if it is
$\bA$-semisimple. 
}
\section{Ladder representations: definitions}\label{sec:2}

We consider the graded Hecke algebra of type $A$. In this case, we can
classify the $\bullet$-unitary dual. We begin by recalling
Zelevinsky's classification \cite{Z} of the simple modules. We will
phrase the classification ``with quotients'' rather than
``submodules'', cf. \cite[\S10]{Z}. 

\subsection{Multisegments} We restrict to  $\bH$-modules with real
central character. By \cite[Corollary 4.3.2 or Corollary 5.1.3]{BC},
every simple $\bH$-module with real central character admits a
nondegenerate $\bullet$-invariant hermitian form. 

A segment is a set $\Delta=\{a,a+1,a+2,...,b\}$, where $a,b\in\bR$ and
$a\equiv b$ (mod $\bZ$). We will write $\Delta=[a,b]$ and $|\Delta|=b-a+1$ for the length. A multisegment
is an ordered collection $(\Delta_1,\Delta_2,\dots,\Delta_r)$ of
segments. Following \cite[\S4.1]{Z}, two segments $\Delta_1$ and
$\Delta_2$ are called 
\begin{enumerate}
\item[(a)] linked, if $\Delta_1\not\subset\Delta_2$,
  $\Delta_2\not\subset\Delta_1$, and $\Delta_1\cup \Delta_2$ is a
  segment; 
\item[(b)] juxtaposed, if $\Delta_1,\Delta_2$ are linked and
  $\Delta_1\cap\Delta_2=\emptyset$;  
\end{enumerate}
One says that 
\begin{enumerate}
\item[(c)] $\Delta_1$ precedes $\Delta_2$ if $\Delta_1,\Delta_2$ are
  linked and $a_1<a_2.$ 
\end{enumerate}

For every segment $\Delta$ with $m=b-a+1$, let $\langle\Delta\rangle$
denote the one-dimensional $\bH_m$-module which extends the sign
$W$-representation and on which $\bA$ acts by the character
$\bC_{[a,b]}.$ If $(\Delta_1,\Delta_2,\dots,\Delta_r)$ is a
multisegment, denote by  
\begin{equation}
\langle\Delta_1\rangle\times\langle\Delta_2\rangle\times\dots\times\langle\Delta_r\rangle 
\end{equation}
the induced module $\bH_n\otimes_{\bH_{m_1}\times \bH_{m_2}\times\dots\times\bH_{m_r}} (\langle\Delta_1\rangle\boxtimes\langle\Delta_2\rangle\boxtimes\dots\boxtimes\langle\Delta_r\rangle)$, where $m_i=b_i-a_i+1$ and $n=\sum m_i.$

We need two of the main results from \cite{Z}.

\begin{theorem}[{\cite[Theorem 4.2]{Z}}]\label{t:irred-ind}
The following conditions are equivalent:
\begin{enumerate}
\item The module $\langle\Delta_1\rangle\times\dots\times\langle\Delta_r\rangle$ is irreducible.
\item For each $i,j=1,\dots,r$, the segments $\Delta_i$ and $\Delta_j$ are not linked.
\end{enumerate}
\end{theorem}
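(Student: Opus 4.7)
The plan is to prove the two implications $(2)\Rightarrow(1)$ and $(1)\Rightarrow(2)$ separately, using the theory of standard intertwining operators between parabolically induced modules together with a reduction to the rank-two case via transitivity of induction.

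For $(2)\Rightarrow(1)$, I first reduce to $r=2$: if the result holds for pairs, then iterating the argument over all adjacent transpositions shows that any reordering of the tuple $(\Delta_1,\ldots,\Delta_r)$ produces an isomorphic module, and no proper submodule can be stable under this braid of identifications. In the rank-two case, the central tool is the standard intertwining operator
\[
T\colon \langle\Delta_1\rangle\times\langle\Delta_2\rangle \longrightarrow \langle\Delta_2\rangle\times\langle\Delta_1\rangle,
\]
built from the longest element of the Weyl coset permuting the two blocks. The key computation is that the determinant of $T$ factors as a product of linear forms of the shape $(a-b-k_\al)$ indexed by pairs $(a,b)\in\Delta_1\times\Delta_2$, and this product is nonzero precisely when $\Delta_1$ and $\Delta_2$ are not linked. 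Under the hypothesis, $T$ is therefore an isomorphism, and this, combined with the uniqueness of the Langlands--Zelevinsky irreducible quotient (obtained as the image of $T$ into its target) and a socle/cosocle comparison, forces $\langle\Delta_1\rangle\times\langle\Delta_2\rangle$ to be irreducible.

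For $(1)\Rightarrow(2)$, the contrapositive construction is explicit. Reducing again to $r=2$ with $\Delta_1$ preceding $\Delta_2$, the union $\Delta=\Delta_1\cup\Delta_2$ is a segment and $\Delta'=\Delta_1\cap\Delta_2$ is a segment or is empty. The merged module $\langle\Delta\rangle\times\langle\Delta'\rangle$ has the same central character as $\langle\Delta_1\rangle\times\langle\Delta_2\rangle$ but a distinct Langlands--Zelevinsky parameter, so it is a non-isomorphic composition factor of the latter, witnessing reducibility. Equivalently, the intertwiner $T$ above acquires a kernel exactly in the linked case, directly producing a nonzero proper submodule.

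The hard step is the explicit determinantal calculation for $T$ in terms of the weight coordinates on $\bA$. In the $p$-adic setting this is classical (Rogawski, Tadi\'c), and in the graded Hecke algebra of type $A$ it translates into a computation with Lusztig's intertwiners; the combinatorics are tractable but require careful bookkeeping of signs and of how $\bA$ acts on the distinguished weight basis arising from the sign character of $W$ on each factor. Once the determinantal formula is secured, together with the fact that its zero locus coincides with the linked-segment condition, the rest of the argument is routine Langlands--Zelevinsky bookkeeping combined with the reduction to $r=2$.
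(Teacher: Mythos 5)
The paper does not prove this statement at all: it is quoted directly from Zelevinsky \cite[Theorem 4.2]{Z} (transported to the graded Hecke algebra setting), so there is no in-paper argument to compare yours against; the comparison must be with Zelevinsky's original proof. Measured against that, your proposal has genuine gaps. The most serious one is the reduction of $(2)\Rightarrow(1)$ to $r=2$: knowing that each pairwise product $\langle\Delta_i\rangle\times\langle\Delta_j\rangle$ is irreducible and that reorderings give isomorphic modules does \emph{not} imply irreducibility of the $r$-fold product, and ``no proper submodule can be stable under this braid of identifications'' is not an argument. This reduction is precisely the hard content of Zelevinsky's proof, which proceeds by induction on $r$ using the derivative functors: he computes the highest derivative of the product, shows the product has a unique irreducible submodule and a unique irreducible quotient, identifies the two, and checks that this constituent occurs with multiplicity one in the composition series. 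None of that machinery appears in your sketch, and without it the general case does not follow from the rank-two case.

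Second, the ``key computation'' you defer to is stated with the wrong answer. The scalar obtained by composing the two intertwining operators between $\langle\Delta_1\rangle\times\langle\Delta_2\rangle$ and $\langle\Delta_2\rangle\times\langle\Delta_1\rangle$ is a ratio of linear factors involving only the endpoints $a_1,b_1,a_2,b_2$ of the segments; it is not a product of factors $(a-b-k_\al)$ over all pairs $(a,b)\in\Delta_1\times\Delta_2$. Your formula would vanish for nested segments such as $\Delta_1=[0,5]$, $\Delta_2=[1,2]$ (take $a=2$, $b=1$), which are unlinked and give an irreducible product, so the zero locus you assert does not coincide with the linked condition. Relatedly, in the $(1)\Rightarrow(2)$ direction, ``same central character, different Langlands--Zelevinsky parameter, hence a non-isomorphic composition factor'' is a non sequitur: equality of central (or even cuspidal) support does not place $\langle\Delta_1\cup\Delta_2\rangle\times\langle\Delta_1\cap\Delta_2\rangle$ in the composition series of $\langle\Delta_1\rangle\times\langle\Delta_2\rangle$. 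What is actually done is to produce a nonzero, non-invertible intertwiner between the two products via a Jacquet-module (derivative) computation and compare lengths. The intertwining-operator strategy can be made to work for $r=2$, but as written both the determinantal formula and the reduction to $r=2$ would fail.
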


\begin{theorem}[{\cite[Theorem 6.1]{Z}}]\label{t:zel-class}
\begin{enumerate}
\item[(a)] Let $(\Delta_1,\dots,\Delta_r)$ be a multisegment. Suppose
  that for each $i<j$, $\Delta_i$ does not precede $\Delta_j$. Then
  the representation
  $\langle\Delta_1\rangle\times\dots\times\langle\Delta_r\rangle$ has
  a unique irreducible quotient denoted by $\langle
  \Delta_1,\dots,\Delta_r\rangle.$ 
\item[(b)] The modules $\langle \Delta_1,\dots,\Delta_r\rangle$ and
  $\langle \Delta_1',\dots,\Delta_s'\rangle$ are isomorphic if and
  only if the corresponding multisegments are equal up to a
  rearrangement. 
\item[(c)] Every simple $\bH_n$-module is isomorphic to one of the
  form
  $\langle\Delta_1\rangle\times\dots\times\langle\Delta_r\rangle$. 
\end{enumerate}
\end{theorem}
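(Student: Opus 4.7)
The plan is to follow the Langlands-type classification philosophy adapted to the graded affine Hecke algebra $\bH_n$ of type $A$, proving the three parts as existence, uniqueness, and exhaustion by a double induction on $n$ and on the number of segments $r$. The guiding principle is that the ``does not precede'' condition on the ordering of segments places the inducing data in a distinguished Langlands chamber and singles out one weight of the standard module $M=\langle\Delta_1\rangle\times\cdots\times\langle\Delta_r\rangle$ as extremal, thereby isolating a unique irreducible quotient.

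For part (a), I would analyze the $\bA$-weight decomposition of $M$ via Mackey/induction in stages: the weights are $W$-translates of the concatenated character $\la_0=(\cc(\langle\Delta_1\rangle),\ldots,\cc(\langle\Delta_r\rangle))\in V_\bC^*$, with combinatorially computable multiplicities. The key lemma is that, under the ``does not precede'' hypothesis, $\la_0$ occurs with multiplicity one in $\Wt(M)$ and is extremal in the sense that $\la_0\notin \Wt(M')$ for any proper submodule $M'\subsetneq M$. It then follows that the sum of all proper submodules of $M$ is itself proper, and the quotient by this sum is the unique irreducible quotient $\langle\Delta_1,\ldots,\Delta_r\rangle$.

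For part (b), I would recover the multisegment from $\pi=\langle\Delta_1,\ldots,\Delta_r\rangle$ by forming its Jacquet module with respect to the block parabolic subalgebra matching the segment lengths and isolating the summand carrying the $\la_0$-weight space; by part (a) and Frobenius reciprocity this procedure determines $(\Delta_1,\ldots,\Delta_r)$ up to the rearrangements allowed by the ordering. Part (c) then follows inductively: given a simple $\bH_n$-module $\pi$, choose a weight $\la\in\Wt(\pi)$ maximal for a linear functional on $V_\bC^*$ adapted to the standard basis, group the coordinates of $\la$ into maximal consecutive runs to form segments $\Delta_i$, order them so that $\Delta_i$ does not precede $\Delta_j$ for $i<j$, and use Frobenius reciprocity together with extremality to realize $\pi$ as the unique irreducible quotient of the resulting $\langle\Delta_1\rangle\times\cdots\times\langle\Delta_r\rangle$.

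The main obstacle is the extremality statement underlying part (a): proving that no proper submodule of $M$ carries a vector of weight $\la_0$. Equivalently, any nonzero intertwiner into $M$ from a rearrangement $\langle\Delta_{\sigma(1)}\rangle\times\cdots\times\langle\Delta_{\sigma(r)}\rangle$ whose image contains a $\la_0$-vector must be surjective. This demands careful control over the composition of elementary intertwiners $\langle\Delta_i\rangle\times\langle\Delta_{i+1}\rangle\to\langle\Delta_{i+1}\rangle\times\langle\Delta_i\rangle$; Theorem \ref{t:irred-ind} localizes the reducibility analysis to linked pairs, so the whole classification ultimately reduces to the explicit submodule structure of the two-segment induced module, which provides the combinatorial engine of the argument.
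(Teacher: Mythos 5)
A point of reference first: the paper does not prove Theorem \ref{t:zel-class}. It is quoted verbatim from Zelevinsky \cite{Z} (Theorem 6.1 there), and the only commentary the authors offer is the remark immediately following it, namely that the statement is essentially the Langlands classification for $GL(n)$, sharpened by the combinatorics of segments. Your sketch follows exactly that route, so there is no divergence of method to report, only the question of whether your outline would actually close.

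Two steps need repair. The key lemma you isolate for part (a) --- that $\la_0$ occurs with multiplicity one in $\Wt(M)$ --- is false as stated: if two segments coincide (or, more generally, are unlinked with coincident character data), the block permutation interchanging them stabilizes $\la_0$ but does not lie in the subgroup $S_{m_1}\times\dots\times S_{m_r}$ from which one induces, so the generalized $\la_0$-weight space of $M$ has dimension at least $2$. Relatedly, the hypothesis that $\Delta_i$ does not precede $\Delta_j$ for $i<j$ is strictly weaker than the Langlands dominance condition on the continuous parameter $\nu$: it permits unlinked segments in arbitrary order. The standard fix, which is also Zelevinsky's, is to first invoke Theorem \ref{t:irred-ind} to permute unlinked segments freely and to group those with equal real parts into a single irreducibly induced (tempered) factor; only after this reduction does one have genuine Langlands data $(M,\sig,\nu)$ with $\nu$ dominant. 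The extremality argument must then be phrased in terms of exponents of the Jacquet module with respect to the opposite parabolic rather than raw $\bA$-weights, since the standard module is not $\bA$-semisimple in general. With that reduction in place, your outline for (b) (recovering the data from the leading exponent of the Jacquet module) and for (c) (Frobenius reciprocity plus induction) is the standard argument, and, as you correctly note at the end, the genuine combinatorial content sits in the explicit submodule structure of $\langle\Delta_1\rangle\times\langle\Delta_2\rangle$ for linked pairs, which is where \cite{Z} does the real work.
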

{\begin{remark}
For the most part, the above results   are instances of the Langlands
classification. A multisegment corresponds to data $(M,\sig,\nu)$
where $$M=GL(b_1-a_1+1)\times\dots\times GL(b_r-a_r+1)$$
is a Levi component,
the tempered representation $\sig$ is the Steinberg, and the $(a_i,b_i)$
determine the $\nu.$ The fact that $\Delta_i$ precedes $\Delta_j$ is
the usual dominance condition for $\nu.$ The remaining results are
sharpenings of the Langlands classification in the case of $GL(n).$ 
\end{remark}
}
\begin{definition}[Ladder representations \cite{LM}]\label{d:ladder} Let
  $\Delta_i=[a_i,b_i]$ $1\le i\le r$ be Zelevinsky segments. If
  $a_1>a_2>\dots>a_r$ and $b_1>b_2>\dots>b_r$, call the irreducible
  representation $\langle\Delta_1,\Delta_2,\dots,\Delta_r\rangle$ a
  ladder representation.
\end{definition}

\begin{example}[Speh representations \cite{BM}]\label{ex:speh} Let
  $\Delta_i$, $1\le 
  i\le r$ be segments as in Definition \ref{d:ladder}, such that
  $b_i-a_i+1=d$ for a fixed $d$ and $a_{i}-a_{i+1}=1$ for all
  $i$. Then $\langle\Delta_1,\dots,\Delta_r\rangle$ is
  irreducible as an $S_n$-representation, isomorphic to the
  $S_n$-representation parameterized by the rectangular Young diagram
  with $r$ rows and $d$ columns. These modules are both
  $\bullet$-unitary and $\star$-unitary (\cite{BM,CM}) and correspond to the
  ($I$-fixed vectors) of Speh representations.
\end{example}

\subsection{Cherednik's construction}\label{s:cherednik}
\ 
{
As in Definition \ref{d:ladder}, let $\langle\Delta_1,\dots,\Delta_r\rangle$, $a_1>a_2>\dots>a_r$, $b_1>b_2>\dots>b_r$ be a ladder representation.
The interesting case  is
when $\Delta_i$ is linked to $\Delta_{i+1}$ for all $i$. In fact,
since 
tensoring with a character of the center of $\bH$ does not change
$\bA$-semisimplicity, we may even assume that $a_i,b_i\in\bZ$ for all
$i$. From now on, this type of ladder representations will be called
{\it integral}.

\smallskip

Following \cite{Ch}, we give a
combinatorial construction of integral 
ladder representations.  Let $\langle\Delta_1,\dots,\Delta_r\rangle$ be an integral ladder representation with the notation as above. 
Set 
\begin{equation}\label{eq:segments}
\lambda=(a_1,\dots, b_1, a_2,\dots,b_2,\dots,a_r,\dots,b_r)\in\bZ^n
\end{equation}
viewed as an element of $V_\bC^\vee\cong \bC^n$. The underlying
multisegment $(\Delta_1,\dots,\Delta_r)$ gives a 
skew-Young diagram, where each box in the Young diagram corresponds to
an integer in one of the multisegments. 
{More precisely, the underlying skew diagram is formed as follows. The
  first segment $\Delta_1$ gives the top row with $|\Delta_1|$ boxes,
  each box for one of the integers in $\Delta_1$ in order. The segment
  segment $\Delta_2$ gives the second row with $|\Delta_2|$ boxes,
  immediately below the first, etc. The rows are aligned so that
\begin{enumerate}
\item the two boxes  are in the same column if and only if the they correspond to the same integer in the multisegment, and
\item two boxes is two adjacent columns correspond to two consecutive integers in the multisegment.
\end{enumerate}
}
For example, if the multisegment is $([2,4],[0,2],[-2,-1])$, the resulting 
 skew Young diagram is 
\begin{equation}
\young(::::\hfill\hfill\hfill,::\hfill\hfill\hfill,\hfill\hfill).
\end{equation}
Notice that the skew Young diagram does not recover the integral
ladder representation uniquely, only up to tensoring with a central
character. However, if we specify an integer $a$ such that the first
segment starts with $a$, then the multisegment is determined.  
}

\smallskip
\smallskip

{We fix a skew Young diagram as above and we will form skew Young tableaux with that shape.} Let $[1\dots n]$ be the set of
integers $1,2,\dots,n$. Let $Y_1$ be the skew Young
tableau with entries in $[1\dots n]$ such that in the first row, the entries
are, in order, $1, 2,\dots, b_1-a_1+1$, in the second row,
$b_1-a_1+2,b_1-a_1+3,\dots, b_1+b_2-a_1-a_2+2$, etc. {In our example,
\begin{equation}
Y_1=\young(::::123,::456,78).
\end{equation}
}
Consider all skew Young tableaux with entries in $[1\dots n]$ subject to the
requirements:
\begin{enumerate}
\item the entries  are increasing left-right on each row;
\item the entries are increasing up-down on each $45^\circ$-diagonal.
\end{enumerate}

Denote every such tableau by $Y_w$, where $w\in S_n$ is the
permutation transforming $(1,2,\dots,n)$ to the entries of the tableau
read in order from the top row to the bottom row and on each row from
left to right. Let
\begin{equation}\label{e:W-sigma}
W(\Delta_1,\dots,\Delta_r)=\text{ the set of }w\in S_n\text{ parameterizing the tableaux
}Y_w\text{ for }(\Delta_1,\dots,\Delta_r).
\end{equation}

\begin{theorem}[Cherednik {\cite[Theorem 4]{Ch}}, see also Ram {\cite{Ra}}]\label{t:Ch} The set $\{Y_w\}$
  defined above is a basis of a simple $\bH$-module $\mathsf
  C(\Delta_1,\dots,\Delta_r)$ such that 
\begin{enumerate}
\item $Y_w$ is an $\bA$-weight vector with weight $w(\lambda).$ 
\item the action of $W$ on $\{Y_w\}$ is as follows:
\begin{equation}
\pi(t_{s_\al})Y_w=\begin{cases} \frac {1}{(\al,w(\la))} Y_w,
  &\text{ if } s_\al w\notin W(\Delta_1,\dots,\Delta_r)\\
\frac{1}{(\al,w(\la))} Y_w+(1+\frac {1}{(\al,w(\la))})
Y_{s_\al w}, &\text{ if } s_\al w \in W(\Delta_1,\dots,\Delta_r),\\
\end{cases}
\end{equation}
for every $\al\in\Pi.$
\end{enumerate}
\end{theorem}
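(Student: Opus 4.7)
The plan is to follow the standard seminormal-basis approach of Cherednik \cite{Ch} and Ram \cite{Ra}. The proof splits into two halves: first, verify that the declarations in (1) and (2) assemble into a well-defined $\bH$-module structure on the vector space $V := \bigoplus_{w} \bC Y_w$; second, establish simplicity using the $\bA$-weight decomposition.

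For the first half, I would verify the defining relations of Definition 1.1.1. The cross-relation $\omega\, t_{s_\alpha} = t_{s_\alpha} s_\alpha(\omega) + k_\alpha \Delta_\alpha(\omega)$ reduces on each $Y_w$ to a one-line identity comparing the $\bA$-weights $w(\lambda)$ and $s_\alpha w(\lambda)$, and the constant $\Delta_\alpha(\omega) = \langle \omega, \alpha^\vee\rangle$. For the quadratic relation $t_{s_\alpha}^2 = 1$: when $s_\alpha w \in W(\Delta_1,\dots,\Delta_r)$, it falls out from a direct two-term computation using $(\alpha, s_\alpha w(\lambda)) = -(\alpha, w(\lambda))$; when $s_\alpha w \notin W(\Delta_1,\dots,\Delta_r)$, it requires the combinatorial fact $(\alpha, w(\lambda))^2 = 1$. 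This last identity is the heart of the construction: the tableau-validity conditions (row-increasing and diagonal-increasing) are precisely calibrated so that a forbidden swap of entries $j, j{+}1$ in $Y_w$ occurs exactly when the corresponding contents differ by $\pm 1$, i.e.\ when the simple root $\alpha = e_j - e_{j+1}$ evaluates to $\pm 1$ on $w(\lambda)$. The braid relations $t_{s_\alpha} t_{s_\beta} t_{s_\alpha} = t_{s_\beta} t_{s_\alpha} t_{s_\beta}$ (adjacent simple roots) and $t_{s_\alpha} t_{s_\beta} = t_{s_\beta} t_{s_\alpha}$ (non-adjacent) are the main computational step: each requires a case analysis over the validity patterns of the three or four relevant tableaux in $\{Y_{uw} : u \leq s_\alpha s_\beta s_\alpha\}$, with the same combinatorial calibration ensuring the arithmetic works out. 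This case analysis is the expected principal obstacle.

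For simplicity, since each $Y_w$ is an $\bA$-weight vector, any $\bH$-submodule $M \subset V$ decomposes as a sum of its intersections with weight spaces. The off-diagonal coefficient $1 + \frac{1}{(\alpha, w(\lambda))}$ appearing in the valid case of the $t_{s_\alpha}$ formula is nonzero along every edge of the tableau graph on $W(\Delta_1,\dots,\Delta_r)$ with edges $\{w, s_\alpha w\}$ for $s_\alpha w$ valid, and this graph is connected (one can exchange any two valid tableaux by successive admissible swaps). Hence once $M$ contains a single $Y_w$, iterated application of the $t_{s_\alpha}$'s forces $M = V$. When $\lambda$ has repeated entries (so that several valid $Y_w$'s can share an $\bA$-weight), one first applies a Jucys--Murphy-type element to separate a single basis vector from a non-trivial element of $M$; this is routine in type $A$ and is what is referred to as a \emph{calibrated} representation in \cite{Ra}.
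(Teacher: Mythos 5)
The paper offers no proof of Theorem \ref{t:Ch}: it is imported from Cherednik \cite{Ch} and Ram \cite{Ra}, so there is no internal argument to compare against. Your sketch follows precisely the seminormal-basis strategy of those references (verify the cross, quadratic, and braid relations on the declared basis, then get simplicity from the one-dimensional weight decomposition and connectivity of the admissible-swap graph), and the outline is sound; the local computations you describe for the cross and quadratic relations do check out.

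One point deserves flagging. The ``calibration'' you state --- a swap of $j,j+1$ is forbidden exactly when the contents of their boxes differ by $\pm1$ --- is asserted rather than argued, and the direction you actually need for simplicity is the nontrivial one: if $s_\al w$ is again a valid tableau you must know $(\al,w(\la))\neq -1$, so that the coefficient $1+\tfrac{1}{(\al,w(\la))}$ on the edge $\{w,s_\al w\}$ does not vanish (the forbidden direction only needs the easy implication ``forbidden $\Rightarrow$ adjacent $\Rightarrow$ contents differ by $\pm1$,'' which gives $t_{s_\al}^2=1$). The converse fails for arbitrary fillings: two boxes can have contents differing by $1$ without being row- or diagonal-adjacent. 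What saves the day for ladder shapes is that in a \emph{valid} tableau consecutive entries cannot occupy such a non-adjacent pair: the row and diagonal inequalities along the connected chain of boxes joining them would force an integer strictly between $j$ and $j+1$. This small combinatorial lemma, rather than the braid relations, is where the hypothesis on the shape enters, and it is also what guarantees (together with the forced increase of entries down each column) that the $\bA$-weight spaces are one-dimensional, making your Jucys--Murphy fallback unnecessary.
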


Since the weight $\lambda$ is also the Langlands weight of the
irreducible ladder representation
$\langle\Delta_1,\dots,\Delta_r\rangle,$ we clearly have
\begin{equation}
\langle\Delta_1,\dots,\Delta_r\rangle\cong \mathsf  C(\Delta_1,\dots,\Delta_r),
\end{equation}
for every integral ladder representation.

\section{Ladder representations: functors from category $\C O$ to $\bH$-modules}

In this section, we apply some constructions of Zelevinsky
\cite{Ze-functor} and Arakawa and Suzuki \cite{AS,Su} to the study of
$\bullet$-unitary representations.

\subsection{Category $O$} Let $\fg$ be a complex
reductive Lie algebra with universal enveloping algebra $U(\fg)$. Fix
a Cartan subalgebra $\fk h\subset\fg$, and a Borel subalgebra $\fk
b=\fk h\oplus \fk n.$ Let $R\subset \fk h^*$ denote the roots of $\fg$ with respect
to $\fk h,$ and let $R^+$ be the positive roots with respect to $\fk
b.$ Let $W=N_G(\fk h)/Z_G(\fk h)$ be the Weyl group with length
function $\ell.$ 

Let $\fk n^-$ be the nilradical of the opposite Borel
subalgebra. Let $\Pi$ be the simple roots defined by $R^+$, and for
every root $\al$, let $\al^\vee\in\fk h$ be the coroot. Let $\al_i$,
$i=1,|\Pi|$ denote the simple roots, and $\om_i^\vee$ the corresponding
fundamental coweights. Set $\rho=\frac 12 \sum_{\al\in R^+} \al.$ We
denote by $\langle ~,~\rangle$ the pairing between $\fk h^*$ and $\fk h.$

Define
\begin{align*}
\Lambda&=\{\lambda\in \fk h^*: \langle \lambda,\al^\vee\rangle\in\bZ,~\text{for
  all }\al\in R\};\\
\Lambda^+&=\{\lambda\in \fk h^*: \langle \lambda,\al^\vee\rangle\in\bZ_{\ge 0},~\text{for
  all }\al\in R^+\}.\\
\end{align*}
Let
$O$ denote the category of finitely generated $U(\fg)$-modules, which
are $\fk n$-locally finite and $\fk h$-semisimple. If $X$ is a module
in $O$, let $\Omega(X)$ denote the set of $\fk h$-weights of $X$.

For every $\mu\in \fk h^*$, let $M(\mu)=U(\fg)\otimes_{U(\fk
  b)} \bC v_\mu$ denote the Verma module with highest weight
$\mu$ {and infinitesimal character $\mu+\rho$}.  
Then $M(\mu)\in O$ has a unique simple
quotient, the highest weight module $L(\mu).$ As it is well known,
$L(\mu)$ is a simple finite dimensional module if and only if
$\mu\in\Lambda^+.$

For every $w\in W,$ $\lambda\in\fk h^*$, define
$$w\circ\lambda=w(\lambda+\rho)-\rho.$$

For $X\in O,$ let 
\begin{equation}
H_0(\fk n^-,X)=X/\fk n^- X
\end{equation}
denote the $0$-th $\fk n^-$-homology space, viewed as an $\fk
h$-module. For every $\lambda\in \Lambda^+$ and every finite
dimensional $\fg$-module $\C V$, define the functor
\begin{equation}\label{functor-vs}
F_{\lambda,\C V}: O\to \text{Vect},\quad {F_{\lambda,\C V}}(X)=H_0(\fk
n^-,X\otimes\C V)_\lambda,
\end{equation}
where the subscript stands for the $\lambda$-weight space, and
$\text{Vect}$ denotes the category of $\bC$-vector spaces.

\begin{remark}
In general, $F_{\lambda,\C V}$ need not be an exact functor. However,
if one assumes that $\lambda+\rho\in \Lambda^+$, then $F_{\lambda,\C
  V}$ is exact. See for example \cite[Proposition 1.4.2]{AS}.
\end{remark}

Let $L(\mu)$ be a simple finite dimensional module. Recall that there
exists a resolution of $L(\mu)$ in $O$ defined by
Bernstein-Gelfand-Gelfand:
\begin{equation}\label{BGG}
0\to C_N\to C_{N-1}\to\dots\to C_1\to C_0\to L(\mu)\to 0,\quad\text{where }
C_i=\bigoplus_{w\in W,\ell(w)=i} M_{w\circ \mu}.
\end{equation}
In particular, applying the Euler-Poincar\'e principle, the
identity 
\begin{equation}\label{char-O}
L(\mu)=\sum_{w\in W}\sgn(w) M_{w\circ\mu}
\end{equation}
holds in the Grothendieck group of $O$.
{\begin{proposition}[{\cite[Proposition 1]{Ze-functor}}] 
Fix
  $\lambda\in\Lambda^+$, $\mu\in \Lambda^+$, $\chi\in\Lambda$, and a
  finite dimensional representation $\C V$. 
\begin{enumerate}
\item The functor $F_{\lambda,\C V}$ transforms the BGG resolution
  (\ref{BGG}) into an exact sequence.
\item There are natural $\bC$-linear isomorphisms 
\begin{equation}\label{image-vs}
F_{\lambda,\C V}(M(\chi))=\C V_{\lambda-\chi}\text{ and }
F_{\lambda,\C V}(L(\mu))=\C V_{\lambda-\mu}[\mu],
\end{equation}
{where $\C V_{\lambda-\chi}$ denotes the $(\lambda-\chi)-$weight 
space of $\C V$, and }
$$\C V_{\lambda-\mu}[\mu]=\{v\in \C V_{\lambda-\mu}:
e_\al^{\langle \mu+\rho,\al^\vee\rangle} v=0,\text{ for all
}\al\in\Pi\}.$$ Here $e_\al\in \fk n$ denotes a fixed root vector for
$\al\in\Pi.$ 
\end{enumerate}
\end{proposition}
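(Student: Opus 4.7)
The plan is to use exactness of $F_{\lambda, \C V}$, a PBW computation for Verma modules, and the tail of the BGG resolution to extract the description for simple modules. First, for (1), since $\lambda \in \Lambda^+$ and $\langle \rho, \alpha^\vee\rangle = 1$ for every simple root $\alpha$, the shift $\lambda + \rho$ lies in $\Lambda^+$, so the preceding remark ensures exactness of $F_{\lambda, \C V}$; applying it to (\ref{BGG}) gives the first assertion. For the Verma case of (2), the projection formula together with PBW yields
$$M(\chi) \otimes \C V \;\cong\; U(\fg) \otimes_{U(\fk b)}(\bC v_\chi \otimes \C V|_{\fk b}) \;\cong\; U(\fk n^-) \otimes_\bC (\bC v_\chi \otimes \C V),$$
the last isomorphism displaying $M(\chi)\otimes \C V$ as a free $U(\fk n^-)$-module on $v_\chi \otimes \C V$. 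Hence $H_0(\fk n^-, M(\chi)\otimes \C V)$ is naturally isomorphic to $\bC v_\chi \otimes \C V$ as an $\fk h$-module (the class of $v_\chi \otimes v$ has weight $\chi + \mathrm{wt}(v)$), whose $\lambda$-weight space is $\C V_{\lambda-\chi}$.

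Next, I would apply $F_{\lambda, \C V}$ to the tail $\bigoplus_i M(s_i\circ \mu)\xrightarrow{\phi} M(\mu)\to L(\mu)\to 0$ of (\ref{BGG}), where $\phi$ sends the highest-weight generator of the $i$-th summand to $f_{\alpha_i}^{n_i+1} v_\mu \in M(\mu)$, with $n_i := \langle \mu, \alpha_i^\vee\rangle$. A short induction modulo $\fk n^-(M(\mu)\otimes \C V)$ yields the key identity $f_\alpha^k v_\mu \otimes v \equiv (-1)^k v_\mu \otimes f_\alpha^k v$, so that under the Verma identification the $i$-th arrow becomes $(-1)^{n_i+1} f_{\alpha_i}^{n_i+1}\colon \C V_{\lambda-\mu+(n_i+1)\alpha_i}\to \C V_{\lambda-\mu}$. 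Exactness of $F_{\lambda, \C V}$ then gives
$$F_{\lambda, \C V}(L(\mu)) \;\cong\; \C V_{\lambda-\mu}\Big/\sum_i f_{\alpha_i}^{n_i+1}\C V_{\lambda-\mu+(n_i+1)\alpha_i}.$$

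The remaining and most delicate step is to identify this cokernel with $\C V_{\lambda-\mu}[\mu]$ through the natural composite $\C V_{\lambda-\mu}[\mu]\hookrightarrow \C V_{\lambda-\mu}\twoheadrightarrow F_{\lambda, \C V}(L(\mu))$. For each simple root $\alpha_i$, decomposing $\C V$ under the $sl_2$-triple $(e_{\alpha_i}, h_{\alpha_i}, f_{\alpha_i})$ and examining each irreducible $sl_2$-summand weight by weight produces the one-root splitting $\C V_\nu = \ker(e_{\alpha_i}^{n_i+1}|_{\C V_\nu})\oplus f_{\alpha_i}^{n_i+1}\C V_{\nu+(n_i+1)\alpha_i}$. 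The main obstacle is combining these one-root decompositions into a joint one over all simple roots, because $\ker e_{\alpha_i}^{n_i+1}$ need not be stable under the other $sl_2$-triples. I would circumvent this by a dimension count: Euler-Poincar\'e applied to the full BGG resolution gives
$$\dim F_{\lambda, \C V}(L(\mu)) \;=\; \sum_{w\in W}\sgn(w)\,\dim \C V_{\lambda+\rho - w(\mu+\rho)},$$
which via the Weyl character formula equals $[L(\mu)\otimes \C V : L(\lambda)]$. A classical Kostant-type calculation, verified directly on the $sl_2(\alpha_i)$-string decompositions, identifies $\dim \C V_{\lambda-\mu}[\mu]$ with the same multiplicity. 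Since both spaces are finite-dimensional of matching dimension, it suffices to establish injectivity of the natural composite, i.e.\ that $\C V_{\lambda-\mu}[\mu]\cap \sum_i f_{\alpha_i}^{n_i+1}\C V=0$; this follows from the one-root splittings together with the commutations $[e_{\alpha_i}, f_{\alpha_j}]=\delta_{ij}h_{\alpha_i}$, applied carefully to a putative nonzero element of the intersection.
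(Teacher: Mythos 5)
The paper does not actually prove this proposition; it is quoted verbatim from Zelevinsky's note \emph{Resolvents, dual pairs, and character formulas} (cited as [Ze2, Proposition 1]), so your attempt has to be judged on its own. Part (1) and the Verma computation in part (2) are correct and complete: exactness of $F_{\lambda,\C V}$ for $\lambda+\rho\in\Lambda^+$ is exactly the remark preceding the proposition, the tensor identity plus PBW does exhibit $M(\chi)\otimes\C V$ as $U(\fk n^-)$-free on $\bC v_\chi\otimes\C V$, and your reduction of $F_{\lambda,\C V}(L(\mu))$ to the cokernel of $\bigoplus_i f_{\al_i}^{n_i+1}\colon \bigoplus_i\C V_{\lambda-\mu+(n_i+1)\al_i}\to\C V_{\lambda-\mu}$ (including the sign identity $f_\al^k v_\mu\otimes v\equiv(-1)^k v_\mu\otimes f_\al^k v$ and the identification $s_i\circ\mu=\mu-(n_i+1)\al_i$) is right. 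The single-root $sl_2$ splitting $\C V_\nu=\ker(e_{\al_i}^{n_i+1})_\nu\oplus f_{\al_i}^{n_i+1}\C V_{\nu+(n_i+1)\al_i}$ is also correct.

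The gap is in the final identification of the cokernel with $\C V_{\lambda-\mu}[\mu]$, and you have correctly located where the difficulty sits but not resolved it. Your dimension-count route rests on the identity $\dim\C V_{\lambda-\mu}[\mu]=[L(\mu)\otimes\C V:L(\lambda)]$, which is Kostant's theorem on tensor product multiplicities --- a genuine theorem of essentially the same depth as the statement being proved (indeed, since $L(\mu)\otimes\C V$ is completely reducible, $F_{\lambda,\C V}(L(\mu))$ is dual to the space of appropriate extremal vectors, i.e.\ to $\Hom_\fg(L(\lambda),L(\mu)\otimes\C V)$, so the second half of (\ref{image-vs}) \emph{is} Kostant's theorem). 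Asserting that it can be ``verified directly on the $sl_2(\al_i)$-string decompositions'' reintroduces exactly the multi-root interaction you flagged as the main obstacle: the subspaces $\ker e_{\al_i}^{n_i+1}$ and $f_{\al_j}^{n_j+1}\C V$ for $i\neq j$ are not compatible with any single $sl_2$-decomposition, and the one-root splittings say nothing about $\bigcap_i\ker e_{\al_i}^{n_i+1}$ versus $\sum_i f_{\al_i}^{n_i+1}\C V$. The same problem defeats your injectivity step: for $v=\sum_i f_{\al_i}^{n_i+1}w_i$ in the joint kernel, applying $e_{\al_1}^{n_1+1}$ and using $[e_{\al_i},f_{\al_j}]=0$ for $i\neq j$ only yields $e_{\al_1}^{n_1+1}f_{\al_1}^{n_1+1}w_1\in\sum_{j\neq 1}f_{\al_j}^{n_j+1}\C V$, which is not a contradiction. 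To close the argument you must either import Kostant's theorem with a proper citation (accepting that the ``naturality'' of the isomorphism then still needs a word), or give a genuine proof of the transversality $\C V_{\lambda-\mu}=\C V_{\lambda-\mu}[\mu]\oplus\sum_i f_{\al_i}^{n_i+1}\C V_{\lambda-\mu+(n_i+1)\al_i}$; neither is supplied by the sketch as written.
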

}

As a corollary, one can transfer formula (\ref{char-O}) via
$F_\lambda.$ This is particularly interesting when the images of
modules in $O$ under $F_\lambda$ admit actions by a different group
(such as in the classical Schur-Weyl duality) or other algebras.

\subsection{The Arakawa-Suzuki functor}
We specialize to $\fg=gl(n,\bC).$ Let $E_{i,j}$ denote the matrix with
$1$ in the $(i,j)$-position and $0$ elsewhere. Let $s_{ij}\in S_n$
denote the transposition. Fix a positive integer $\ell$, and set
\begin{equation}
\C V_\ell=(\bC^n)^{\otimes \ell},
\end{equation}
with the diagonal $\fg$-action.

\begin{remark}
If $\ell=n$, the finite dimensional $\fg$-module $\C V_n$ has the
property that its $0$-weight space is naturally isomorphic to the
standard representation of  $S_n$.
\end{remark}

For every $0\le i<j\le \ell$, consider the operator
\begin{equation}
\Omega_{i,j}=\sum_{1\le k,m\le n} (E_{k,m})_i\otimes (E_{m,k})_j\in
\End(X\otimes\C V_\ell),
\end{equation}
where $(~)_i$ means that the corresponding element acts on the $i$-th
factor of the tensor product. It is well known that $\Omega_{i,j}$,
$1\le i<j\le n$
flips the $i,j$ factors of tensor product, i.e., $\Omega_{i,j}(x\otimes v_1\otimes\dots\otimes
v_i\otimes\dots\otimes v_j\otimes\dots\otimes v_\ell)=x\otimes v_1\otimes\dots\otimes
v_j\otimes\dots\otimes v_i\otimes\dots\otimes v_\ell.$

\begin{lemma}[{\cite[Theorem 2.2.2]{AS},\cite[Lemma 3.1.1]{Su}}] For
  every $X\in O$, the assignment 
\begin{align*}
s_{i,i+1}&\mapsto -\Omega_{i,i+1},\quad &1\le i\le \ell-1,\\
\ep_j^\vee&\mapsto \sum_{0\le i<j}\Omega_{i,j}+\frac{n-1}2,\quad &1\le
j\le \ell,
\end{align*}
extends to an action of the graded Hecke algebra $\bH_\ell$ of
$gl(\ell)$ on $X\otimes\C V_\ell.$
\end{lemma}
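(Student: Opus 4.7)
The plan is to verify, for arbitrary $X \in O$, each of the three families of defining relations of $\bH_\ell$ listed in Definition~\ref{d:graded}: the $S_\ell$-relations among the $t_w$, the commutativity of the polynomial generators $\ep_j^\vee$, and the cross relation (ii). The $S_\ell$-relations are essentially automatic: for $1 \le i \le \ell - 1$ the operator $\Omega_{i,i+1}$ literally transposes the $i$-th and $(i+1)$-st tensor factors of $\C V_\ell$, so it is an involution satisfying the Coxeter braid and far-commutation relations, and the overall sign in $t_{s_i} \mapsto -\Omega_{i,i+1}$ enters only through even words.

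Next, the cross relation $\ep_j^\vee\, t_{s_i} = t_{s_i}\, s_i(\ep_j^\vee) + (\delta_{ij}-\delta_{j,i+1})\Id$ has to be checked. For $j \notin \{i,i+1\}$, after discarding terms with disjoint supports, what remains is the commutation of $\Omega_{i,i+1}$ with the pair $\Omega_{i,j}+\Omega_{i+1,j}$, which is obvious by symmetry. For $j = i$ (the case $j=i+1$ is parallel), substituting the images reduces the desired identity to
\begin{equation*}
\sum_{0\le a<i}\bigl(\Omega_{i,i+1}\Omega_{a,i+1} - \Omega_{a,i}\Omega_{i,i+1}\bigr) + \Omega_{i,i+1}^2 \stackrel{?}{=} \Id.
\end{equation*}
The sum collapses termwise via the identity $\Omega_{i,i+1}\Omega_{a,i+1} = \Omega_{a,i}\Omega_{i,i+1}$, valid for every $a < i$ (including $a = 0$): conjugation by the transposition $\Omega_{i,i+1}$ merely swaps the second index of $\Omega_{a,\cdot}$, regardless of whether the first index is positive or zero. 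The remaining $\Omega_{i,i+1}^2 = \Id$ produces the inhomogeneous term.

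The main obstacle is the commutativity $[\ep_j^\vee, \ep_k^\vee] = 0$ for $j < k$. After eliminating pairs with disjoint indices, the commutator reduces to
\begin{equation*}
[\ep_j^\vee, \ep_k^\vee] = \sum_{0\le a<j}\bigl([\Omega_{a,j},\Omega_{a,k}] + [\Omega_{a,j},\Omega_{j,k}]\bigr).
\end{equation*}
When $a \ge 1$, each summand vanishes by the same symmetry argument: $\Omega_{a,j}$ is an honest transposition, and conjugation by it swaps $\Omega_{a,k}$ and $\Omega_{j,k}$. The residual $a = 0$ term is the technical heart of the lemma: a direct calculation in $U(\fg)^{\otimes 3}$ using $[E_{p,q}, E_{r,s}] = \delta_{q,r}E_{p,s} - \delta_{p,s}E_{r,q}$ yields the twin Casimir-type identities
\begin{equation*}
[\Omega_{0,j},\Omega_{0,k}] = (\Omega_{0,k}-\Omega_{0,j})\,\Omega_{j,k}, \qquad [\Omega_{0,j},\Omega_{j,k}] = (\Omega_{0,j}-\Omega_{0,k})\,\Omega_{j,k},
\end{equation*}
whose sum vanishes. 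This is the only place in the verification where the $\fg$-module structure on $X$ enters essentially.
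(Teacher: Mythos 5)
Your verification is correct: the reduction of the cross relation to $\sum_{a<i}(\Omega_{i,i+1}\Omega_{a,i+1}-\Omega_{a,i}\Omega_{i,i+1})+\Omega_{i,i+1}^2=\Id$, the flip-conjugation argument for $a\ge 1$, and the two commutator identities at $a=0$ (whose sum is the infinitesimal braid relation $[\Omega_{0,j},\Omega_{0,k}+\Omega_{j,k}]=0$) all check out, including the sign bookkeeping forced by $t_{s_i}\mapsto -\Omega_{i,i+1}$. The paper itself gives no proof, deferring entirely to \cite{AS} and \cite{Su}, and your direct check of the defining relations is essentially the argument carried out in those references.
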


Notice the presence of the minus sign in the action of $s_{i,i+1}$
which is not the convention in \cite{AS}. We make this adjustement so
that the results fit with the previous sections. {This is because the
standard modules for $\bH_\ell$ are induced from Steinberg modules to
conform with the Langlands classification, not
the trivial modules as in \cite{AS}.
}

In this way, the functor $F_{\lambda,\C V_\ell}$ from
(\ref{functor-vs}) maps to $\bH_\ell$-modules. Since we will consider
$\lambda$ such that $\lambda+\rho\in \Lambda^+$, this will be an exact
functor. 

In \cite{AS} and
\cite{Su}, the images of Verma modules and highest weight modules are
computed. We recall their results now. 

\smallskip

Let $P(\C V_\ell)\subset \fk h^*$ denote the set of weights {of
  $\C V_\ell$}. If we
identify $\fk h^*$ with $\bC^n,$ then these weights are of the form
$(\ell_1,\dots,\ell_n)$ where $\sum\ell_i=\ell$ {and $\ell_i\ge 0$}.

Assume that $\lambda+\rho\in \Lambda^+$ and let $\mu\in\fk h^*$ be
such that $\lambda-\mu\in P(\C V_\ell).$ Define the multisegment
\cite[(2.2.7)]{Su} 
\begin{equation}
\Phi_{\lambda,\mu}=(\Delta_1,\dots,\Delta_n),\quad
\Delta_i=[\langle\mu+\rho,\ep_i^\vee\rangle,\langle\lambda+
\rho,\ep_i^\vee\rangle-1],  
\end{equation}
and the standard $\bH_\ell$-module
\begin{equation}
\C M(\lambda,\mu)=\langle
\Delta_1,\dots,\Delta_n\rangle=\bH_\ell\otimes_{\bH_{\ell_1}\times\dots\times\bH_{\ell_n}}
(\St\otimes\bC_{\Delta_1})\otimes\dots\otimes(\St\otimes\bC_{\Delta_n}). 
\end{equation}
Let $\C L(\lambda,\mu)$ denote the unique simple quotient of $\C
M(\lambda,\mu).$ The lowest $S_\ell$-type of $\C L(\lambda,\mu)$ is
parameterized by the partition of $\ell$ obtained by ordering 
$\lambda-\mu=(\ell_1,\dots,\ell_n)$ {in decreasing order}.

\begin{theorem}[{\cite[Theorems 3.2.1, 3.2.2]{Su}}]\label{t:AS} Assume
  $\lambda+\rho\in\Lambda^+$ and \newline $\mu\in \lambda-P(\C V_\ell)$.
\begin{enumerate}
\item $F_{\lambda,\C V_\ell}(M(\mu))=\C M(\lambda,\mu)$ as $\bH_\ell$-modules.
\item  If $\mu$ satisfies the condition
\begin{equation}\label{mu-cond}
\langle\mu+\rho,\al^\vee\rangle\in\bZ_{\le 0}\text{ for all }\al\in
R^+\text{ satisfying }\langle\lambda+\rho,\al^\vee\rangle=0,
\end{equation}
then $$F_{\lambda,\C V_\ell}(L(\mu))=\C L(\lambda,\mu).$$
\item If $\mu$ does not satisfy condition (\ref{mu-cond}), then
  $$F_{\lambda,\C V_\ell}(L(\mu))=0.$$
\end{enumerate}
\end{theorem}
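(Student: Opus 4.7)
The plan is to prove the three parts in order, with the exactness of $F_{\lambda,\C V_\ell}$ (guaranteed by $\lambda+\rho\in\Lambda^+$) as the main structural input, and the two parts of Zelevinsky's proposition supplying the underlying vector-space identifications.

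For part (1), I would begin from Zelevinsky's identification $F_{\lambda,\C V_\ell}(M(\mu))\cong (\C V_\ell)_{\lambda-\mu}$. Writing $\lambda-\mu=(\ell_1,\dots,\ell_n)$ with $\ell_i\ge 0$ and $\sum\ell_i=\ell$, this weight space has the standard basis of simple tensors with prescribed multiplicities, so its dimension is $\binom{\ell}{\ell_1,\dots,\ell_n}$, matching $\dim \C M(\lambda,\mu)$ since each Steinberg factor is one-dimensional. Next I would match the $\bH_\ell$-action. The generators $s_{i,i+1}$ act by $-\Omega_{i,i+1}$, which permute adjacent tensor factors and act by $-1$ on coincident indices; hence the weight space is isomorphic as an $S_\ell$-module to $\Ind_{S_{\ell_1}\times\cdots\times S_{\ell_n}}^{S_\ell}\sgn$, precisely the $S_\ell$-structure of $\C M(\lambda,\mu)$. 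Finally, I would compute the action of each $\ep_j^\vee=\sum_{0\le i<j}\Omega_{i,j}+\tfrac{n-1}{2}$ on the ordered basis tensor $v_1^{\otimes\ell_1}\otimes\cdots\otimes v_n^{\otimes\ell_n}$ applied to the highest weight vector of $M(\mu)$: the term $\Omega_{0,j}$ pulls out an $\fk h$-eigenvalue of $\mu$, while $\sum_{0<i<j}\Omega_{i,j}$ contributes a combinatorial correction, producing precisely the endpoints of the segments $[\langle\mu+\rho,\ep_k^\vee\rangle,\langle\lambda+\rho,\ep_k^\vee\rangle-1]$. The matching of $\bA$-weights together with the $S_\ell$-structure is enough to identify the two standard modules.

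For parts (2) and (3), I would apply the exact functor $F_{\lambda,\C V_\ell}$ to the tail of the BGG resolution $C_1\to M(\mu)\to L(\mu)\to 0$, using part (1) to obtain the exact sequence
\begin{equation*}
\bigoplus_{\ell(w)=1}\C M(\lambda,w\circ\mu)\;\longrightarrow\;\C M(\lambda,\mu)\;\longrightarrow\;F_{\lambda,\C V_\ell}(L(\mu))\;\longrightarrow\;0,
\end{equation*}
(where the terms with $\lambda-w\circ\mu\notin P(\C V_\ell)$ vanish). Thus $F_{\lambda,\C V_\ell}(L(\mu))$ is a quotient of the standard module $\C M(\lambda,\mu)$, and since $\C M(\lambda,\mu)$ has $\C L(\lambda,\mu)$ as its unique irreducible quotient, the image is either $\C L(\lambda,\mu)$ or $0$.

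The main obstacle is deciding which alternative occurs, and this is where the second half of Zelevinsky's proposition is essential: $F_{\lambda,\C V_\ell}(L(\mu))\cong (\C V_\ell)_{\lambda-\mu}[\mu]$, the joint kernel of the operators $e_\al^{\langle\mu+\rho,\al^\vee\rangle}$ on the $(\lambda-\mu)$-weight space. For a simple root $\al=\ep_i-\ep_{i+1}$, the operator $e_\al$ acts on $(\bC^n)^{\otimes\ell}$ by converting copies of $v_{i+1}$ into copies of $v_i$, and an $sl(2)_\al$-theory analysis on the weight space determines the dimension of the kernel of each $e_\al^{m}$ in terms of $\ell_i,\ell_{i+1}$ and $m=\langle\mu+\rho,\al^\vee\rangle$. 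Performing this analysis simultaneously over all simple $\al$ and intersecting the kernels, one finds that a nonzero joint kernel exists precisely when, for every $\al$ with $\langle\lambda+\rho,\al^\vee\rangle=0$, the exponent $\langle\mu+\rho,\al^\vee\rangle$ is antidominant, i.e.\ lies in $\bZ_{\le 0}$; this is exactly condition (\ref{mu-cond}). The subtlety is that the constraint $\lambda-\mu\in P(\C V_\ell)$ automatically disposes of the simple roots with $\langle\lambda+\rho,\al^\vee\rangle>0$, so only the singular directions of $\lambda+\rho$ produce an obstruction. This simple-root-by-simple-root combinatorial compatibility is where the real work lies, and it cleanly separates cases (ii) and (iii).
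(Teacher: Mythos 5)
The paper does not prove this theorem at all: it is quoted verbatim from Suzuki \cite[Theorems 3.2.1, 3.2.2]{Su} (building on \cite{AS}), so there is no internal argument to compare against. Your outline follows the same general strategy as the cited source (exactness of $F_{\lambda,\C V_\ell}$, Zelevinsky's vector-space identifications, the BGG tail), and part (1) is essentially right in spirit, although ``matching of $\bA$-weights together with the $S_\ell$-structure'' is not by itself enough to identify two $\bH_\ell$-modules; you need to exhibit a vector on which the parabolic subalgebra $\bH_{\ell_1}\times\dots\times\bH_{\ell_n}$ acts by $\St\otimes\bC_{\Delta_1}\boxtimes\cdots\boxtimes\St\otimes\bC_{\Delta_n}$ and invoke Frobenius reciprocity plus the dimension count.

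The genuine gap is in parts (2)--(3). From the exact sequence you do get that $F_{\lambda,\C V_\ell}(L(\mu))$ is a quotient of $\C M(\lambda,\mu)$, but your conclusion that ``the image is either $\C L(\lambda,\mu)$ or $0$'' does not follow: a standard module with a unique irreducible quotient can have many nonzero reducible quotients, so you have only shown that a nonzero image \emph{surjects onto} $\C L(\lambda,\mu)$. The missing ingredient is the contravariant form: the Shapovalov form on $M(\mu)$ descends to a nondegenerate form on $L(\mu)$, the functor carries it to a $\bullet$-invariant form on $F_{\lambda,\C V_\ell}(L(\mu))$ (this is Lemma \ref{inv-forms}), and the pullback of this nondegenerate form to $\C M(\lambda,\mu)$ has radical equal to the kernel of the surjection; since a nonzero invariant form on the standard module has radical exactly the maximal proper submodule, the nonzero image must be $\C L(\lambda,\mu)$. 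The paper itself flags this in the paragraph preceding Lemma \ref{inv-forms} (``this is an ingredient in the proof of Theorem \ref{t:AS}(2)''), and your proposal omits it. Finally, the $sl(2)$-analysis of $\C V_{\lambda-\mu}[\mu]$ that decides between cases (2) and (3) --- including the reduction of condition (\ref{mu-cond}) from all positive roots singular for $\lambda+\rho$ to simple ones, and the verification that non-singular simple roots impose no obstruction --- is precisely the content of the theorem and is only asserted, not carried out; as written the proposal defers the decisive computation rather than supplying it.
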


Notice that if $\lambda$ in the theorem is such that
$\langle\lambda+\rho,\al^\vee\rangle\ge 1$ for all simple roots $\al,$ then
condition (\ref{mu-cond}) is vacuously true.

\subsection{} We apply the previous results to the ladder
representations. Consider segments $\Delta_i=[a_i,b_i]$, $i=1,n$, such
that $a_1>a_2>\dots$ and $b_1>b_2>\dots.$ Let $\mathsf
C(\Delta_1,\dots,\Delta_n)$ denote the ladder representation for
$\bH_\ell.$  Identify
$(a_1,a_2,\dots,a_n)$ and $(b_1,b_2,\dots,b_n)$ with elements of $\fk
h^*\cong \bC^n$. Set
\begin{equation}
\mu=(a_1,\dots,a_n)-\rho,\quad \lambda=(b_1+1,\dots,b_n+1)-\rho.
\end{equation}
{In coordinates $\rho=(\frac {n-1}2,\frac{n-3}2,\dots,-\frac{n-1}2)$.}

\smallskip

Assume from now on that $(a_1,\dots,a_n)\equiv \rho$ mod $\bZ$. Then
$\lambda$ and $\mu$ just defined satisfy the conditions of Theorem
\ref{t:AS} and, in fact, $\langle \lambda+\rho,\al^\vee\rangle\ge 1$
for all simple roots $\al.$

\begin{proposition}\label{p:char-formula}With the notation as above, we have:
\begin{enumerate}
\item $F_{\lambda,\C V_\ell}(L(\mu))=\mathsf C(\Delta_1,\dots,\Delta_n)$;
\item $\mathsf C(\Delta_1,\dots,\Delta_n)=\sum_{w\in S_n} \sgn(w) \langle
  w\cdot \Delta_1,\dots,w\cdot\Delta_n\rangle,$ where
  $w\cdot\Delta_i:=[a_{w(i)},b_i]$, and the standard representation $\langle
  w\cdot \Delta_1,\dots,w\cdot\Delta_n\rangle$ is understood to be $0$
  if $a_{w(i)}>b_i$ for some $i$.
\end{enumerate}
\end{proposition}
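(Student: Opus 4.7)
The plan is to derive both parts from Theorem \ref{t:AS} combined with the BGG resolution. First, for part (1), I would unpack the definitions of $\lambda$ and $\mu$. Since $\mu+\rho=(a_1,\dots,a_n)$ and $\lambda+\rho=(b_1+1,\dots,b_n+1)$, the multisegment $\Phi_{\lambda,\mu}$ has $i$-th segment
\[
[\langle\mu+\rho,\ep_i^\vee\rangle,\langle\lambda+\rho,\ep_i^\vee\rangle-1]=[a_i,b_i]=\Delta_i,
\]
so $\C M(\lambda,\mu)=\langle\Delta_1,\dots,\Delta_n\rangle$. The ladder assumption $b_1>b_2>\dots>b_n$ with integer values gives $\langle\lambda+\rho,\al^\vee\rangle\geq 1$ for every positive root $\al$, so no positive root annihilates $\lambda+\rho$ and condition \ref{mu-cond} is vacuous. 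Theorem \ref{t:AS}(2) then yields $F_{\lambda,\C V_\ell}(L(\mu))=\C L(\lambda,\mu)$, the unique simple quotient of $\langle\Delta_1,\dots,\Delta_n\rangle$, which equals the ladder representation $\mathsf C(\Delta_1,\dots,\Delta_n)$ by the identification at the end of Section \ref{sec:2}.

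Next, for part (2), I would exploit exactness. Because $\lambda+\rho\in\Lambda^+$ (in fact strictly dominant), $F_{\lambda,\C V_\ell}$ is exact, so applying it termwise to the BGG resolution \ref{BGG} of $L(\mu)$ produces an exact complex of $\bH_\ell$-modules. Passing to the Grothendieck group and using Euler--Poincar\'e:
\[
F_{\lambda,\C V_\ell}(L(\mu))=\sum_{w\in S_n}\sgn(w)\,F_{\lambda,\C V_\ell}(M(w\circ\mu)).
\]
By Theorem \ref{t:AS}(1), each term equals $\C M(\lambda,w\circ\mu)$. Since $w\circ\mu+\rho=w(\mu+\rho)$ has $i$-th coordinate $a_{w^{-1}(i)}$, we obtain
\[
\C M(\lambda,w\circ\mu)=\langle[a_{w^{-1}(1)},b_1],\dots,[a_{w^{-1}(n)},b_n]\rangle.
\]
Reindexing the sum via $w\mapsto w^{-1}$ (a bijection of $S_n$ with $\sgn(w^{-1})=\sgn(w)$) yields $\sum_{w}\sgn(w)\langle w\cdot\Delta_1,\dots,w\cdot\Delta_n\rangle$, which together with part (1) is the stated identity.

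The main point to handle carefully is the interpretation of the degenerate terms where $a_{w(i)}>b_i$ for some $i$. When $a_{w(i)}>b_i+1$ the $i$-th coordinate of $\lambda-w\circ\mu$ is strictly negative, so it is not a weight of $\C V_\ell=(\bC^n)^{\otimes\ell}$ and $F_{\lambda,\C V_\ell}(M(w\circ\mu))=0$ by the vector-space identification \ref{image-vs}, consistent with the stated convention. The boundary case $a_{w(i)}=b_i+1$ gives a length-zero segment and requires verifying either that $\St$ for the rank-zero algebra $\bH_0$ is taken to be trivial/zero by convention, or, more intrinsically, that these terms cancel in pairs under the signature in the Euler--Poincar\'e sum. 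This combinatorial bookkeeping, together with getting the permutation conventions straight (the distinction between $w$ and $w^{-1}$ in $w\circ\mu+\rho$), is the only genuine hurdle; everything else is a direct combination of Theorem \ref{t:AS} with the exactness of $F_{\lambda,\C V_\ell}$ applied to the BGG resolution.
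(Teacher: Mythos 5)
Your argument coincides with the paper's proof: part (1) is exactly Theorem \ref{t:AS}(2) (with condition (\ref{mu-cond}) vacuous because $\langle\lambda+\rho,\al^\vee\rangle\ge 1$), and part (2) is obtained by applying the exact functor $F_{\lambda,\C V_\ell}$ to the BGG identity (\ref{char-O}) and identifying the images of the Verma modules via Theorem \ref{t:AS}(1), including the same $w\mapsto w^{-1}$ reindexing. On the one point you leave open: the correct resolution is that an empty segment $a_{w(i)}=b_i+1$ contributes the unit representation of $\bH_0$ and is simply omitted from the induction (the Lapid--M\'inguez convention); such terms are genuinely nonzero and do not cancel in pairs --- for instance for $(\Delta_1,\Delta_2)=([1,1],[0,0])$ the $w=s$ term is $-\langle[0,1]\rangle$, which is needed to cut the $2$-dimensional standard module down to the $1$-dimensional ladder module --- and this is consistent with $F_{\lambda,\C V_\ell}(M(w\circ\mu))=\C V_{\lambda-w\circ\mu}$, since a weight of $(\bC^n)^{\otimes\ell}$ is allowed to have zero coordinates; only $a_{w(i)}>b_i+1$ forces the term to vanish, so the convention as literally printed in the statement is slightly imprecise and your functorial computation actually yields the correct version.
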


\begin{proof}
Part (1) follows immediately from Theorem \ref{t:AS}(2). For (2), we first
apply the functor $F_{\lambda,\C V_\ell}$ to the BGG formula
(\ref{char-O}), and then identify the images of the Verma modules as
in Theorem \ref{t:AS}(1).
\end{proof}

\begin{remark} Proposition \ref{p:char-formula}(2) recovers the known
  ``determinantal'' character formula for ladder representations of Tadi\'
  c \cite{Ta}, and Lapid-Minguez   \cite[Theorem 1]{LM}, see also \cite{CR}. This approach
  also provides a resolution of the ladder representations which is
  the image of the BGG resolution under the functor. 
\end{remark}

\subsection{} The functor $F_{\lambda,\C V_\ell}$ behaves well with
respect to invariant hermitian (or symmetric bilinear) forms, and in
fact, this is an ingredient in the proof of Theorem \ref{t:AS}(2).  We
recall the results in the 
setting of hermitian rather than symmetric forms, with the obvious
modifications. 

\bigskip
{Recall that $\fk g=gl(n,\bb C)$ viewed as a Lie algebra admits a
complex conjugate linear anti-automorphism $*:A\mapsto \ovl{A^T}.$
A module $X\in O$ is called hermitian if it admits an invariant form
$(~,~)_X$ satisfying
\begin{equation}\label{g-inv}
(Ax,y)_X=(x,A^*y)_X,\quad \text{ for all } A\in\fg=gl(n).
\end{equation}
The standard representation $\bC^n$ is hermitian,  the usual inner
product \newline$$(x,y)_{\bC^n}=\sum_i x_i\overline y_i$$ has property
(\ref{g-inv}). 
}

\bigskip
If $X$ admits an invariant hermitian form,
then $X\otimes \C V_\ell= X\otimes (\bC^n)^{\otimes \ell}$ can be
endowed with the product form.
The following lemma is straightforward.

\begin{lemma}[{\cite[Lemma 4.1.4]{Su}}]\label{inv-forms} Suppose $X$ admits a
  $\fg$-invariant form as in (\ref{g-inv}). Then the form on $X\otimes
  \C V_\ell$ is 
  $\bH_\ell$-invariant with respect to the $\bullet$ star operation of
  $\bH_\ell$.

 If the form on $X$ is nondegenerate (positive definite), then the
 form obtained on $F_{\lambda,\C V_\ell}(X)$ is nondegenerate
 (positive definite).
\end{lemma}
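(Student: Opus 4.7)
The plan is to verify $\bullet$-invariance on the standard generators of $\bH_\ell$, and then to use the $*$-duality between $\fk n^-$ and $\fk n$ to identify an orthogonal complement inside the weight space $(X\otimes \C V_\ell)_\la$ which maps isomorphically onto $F_{\la,\C V_\ell}(X)$.  For the first assertion, since $t_{s_{i,i+1}}^\bullet = t_{s_{i,i+1}}$ and $(\ep_j^\vee)^\bullet = \ep_j^\vee$ (the coweights being real), it suffices to check that each $\Omega_{i,j}$ acts self-adjointly on $X \otimes \C V_\ell$ with respect to the product form.  For $1\le i < j \le \ell$ the operator $\Omega_{i,j}$ simply swaps the $i$-th and $j$-th factors of $\C V_\ell$ and is therefore an isometry.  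For $i=0$, one expands $(\Omega_{0,j}(x\otimes\underline v),\, y\otimes\underline w)$ on pure tensors and uses $E_{k,m}^* = E_{m,k}$ both on $X$ (by hypothesis \eqref{g-inv}) and on the standard Hermitian space $\bC^n$; a relabelling $(k,m)\leftrightarrow (m,k)$ of the summation produces self-adjointness.  The constant $\frac{n-1}{2}$ is real, so the action of $\ep_j^\vee$ is self-adjoint and the first statement follows.

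For the second assertion, the $\fk h$-action on $X\otimes\C V_\ell$ is by self-adjoint operators (since $\fk h$ is spanned by real diagonal matrices), so distinct weight spaces are mutually orthogonal; hence positivity (respectively, nondegeneracy) on all of $X\otimes\C V_\ell$ descends to each weight space.  Because the conjugate transpose $*$ interchanges $\fk n^-$ and $\fk n$, the identity $(n^- y,\, z) = (y,\, (n^-)^* z)$ shows that $z \in (X\otimes \C V_\ell)_\la$ is orthogonal to $[\fk n^-(X\otimes \C V_\ell)]_\la$ if and only if $\fk n \cdot z = 0$.  Weight-space nondegeneracy then yields the orthogonal decomposition
\[
(X\otimes\C V_\ell)_\la \;=\; (X\otimes\C V_\ell)_\la^{\fk n} \;\oplus\; [\fk n^-(X\otimes \C V_\ell)]_\la,
\]
under which the first summand projects isomorphically onto the quotient $F_{\la,\C V_\ell}(X) = (X\otimes\C V_\ell)_\la / [\fk n^-(X\otimes\C V_\ell)]_\la$.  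The form induced on the quotient is the restriction of the original form to $(X\otimes\C V_\ell)_\la^{\fk n}$, and it inherits positive definiteness (respectively, nondegeneracy).

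The only step I expect to require real attention is the identification of the orthogonal complement with the $\fk n^-$-coinvariants; this rests on the nondegeneracy of the form restricted to each weight space $(X\otimes\C V_\ell)_{\la+\al}$ with $\al\in R^+$, which is automatic in the positive definite case.  For the purely nondegenerate case one must additionally check that the restriction of the form to $[\fk n^-(X\otimes\C V_\ell)]_\la$ is itself nondegenerate, which reduces to dimension bookkeeping on finite dimensional $\fk h$-weight spaces, valid since $X\in\CO$.
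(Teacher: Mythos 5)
Your argument for the first assertion is complete and correct: $\bullet$-invariance need only be checked on the generators $t_{s_{i,i+1}}$ and $\ep_j^\vee$ (both fixed by $\bullet$ since they are real), and each $\Omega_{i,j}$ is self-adjoint for the product form --- for $i\ge 1$ because the flip is an involutive isometry, and for $i=0$ by the relabelling $(k,m)\leftrightarrow(m,k)$ using $(E_{k,m})^*=E_{m,k}$ on $X$ (hypothesis (\ref{g-inv})) and on $\bC^n$. Note that the paper itself gives no proof of this lemma (it is called straightforward and attributed to \cite[Lemma 4.1.4]{Su}), so there is no internal argument to compare against; what you wrote is the standard one.

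The one genuine soft spot is the last step in the merely nondegenerate case. The identification of $\bigl([\fk n^-(X\otimes\C V_\ell)]_\la\bigr)^\perp\cap(X\otimes\C V_\ell)_\la$ with $(X\otimes\C V_\ell)_\la^{\fk n}$ is fine, and in finite dimensions it gives the correct dimension count; but the direct sum decomposition you assert is equivalent to $(X\otimes\C V_\ell)_\la^{\fk n}\cap[\fk n^-(X\otimes\C V_\ell)]_\la=0$, i.e., to the nondegeneracy of the form restricted to the $\fk n$-invariants, and this does \emph{not} follow from ``dimension bookkeeping'': a hyperbolic plane together with an isotropic line already violates $V=U\oplus U^\perp$ while all dimensions match. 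What rescues the statement is the standing hypothesis $\la\in\Lambda^+$ (so $\la+\rho$ is dominant regular), which is precisely what forces the natural map from $\fk n$-invariants to $\fk n^-$-coinvariants in weight $\la$ to be an isomorphism --- the same input that makes $F_{\la,\C V_\ell}$ exact, cf. \cite[Proposition 1.4.2]{AS} --- and which is also how the form on the quotient is actually defined in \cite{Su}. You should either invoke that isomorphism explicitly or restrict the claim to the positive definite case, where every restriction of the form is automatically nondegenerate; that case is the only one used in Proposition \ref{p:semi}, and for it your argument is already complete.
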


Combining Lemma \ref{inv-forms} with Proposition \ref{p:char-formula},
we obtain as a consequence the known semisimplicity result for ladder
representations \cite{LM}.

\begin{proposition}\label{p:semi}
Every ladder representation $\mathsf C(\Delta_1,\dots,\Delta_n)$ is 
$\bullet-$unitary, and therefore $\bH_M$-semisimple for every
parabolic Hecke subalgebra $\bH_M.$
\end{proposition}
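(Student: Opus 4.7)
The plan is to realize every ladder representation as the image under the Arakawa--Suzuki functor of a finite-dimensional simple $gl(n,\bC)$-module, and then transport the compact-form positive definite hermitian structure along the functor by means of Lemma~\ref{inv-forms}. First I would reduce to the integral case. Segments in distinct cosets of $\bR/\bZ$ are never linked, so a general ladder representation is an outer tensor product of ``homogeneous'' ladder representations (one per coset); each homogeneous factor becomes integral after tensoring by a real central character, an operation that preserves $\bullet$-unitarity. So it is enough to prove $\bullet$-unitarity for integral ladders.

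For an integral ladder, Proposition~\ref{p:char-formula}(1) identifies $\mathsf C(\Delta_1,\dots,\Delta_n)$ with $F_{\lambda,\C V_\ell}(L(\mu))$, where $\mu=(a_1,\dots,a_n)-\rho$ and $\lambda=(b_1+1,\dots,b_n+1)-\rho$. Since $a_1>\cdots>a_n$ with all $a_i-a_{i+1}\in\bZ_{\ge 1}$, we have $\mu\in\Lambda^+$, so $L(\mu)$ is a finite-dimensional simple $gl(n,\bC)$-module. Such a module carries a positive definite $\fg$-invariant hermitian form compatible with the compact star $*\colon A\mapsto\overline{A^T}$; this is the classical unitarity of irreducible representations of the compact group $U(n)$. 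The standard module $\bC^n$ is unitary for the usual inner product, so $\C V_\ell=(\bC^n)^{\otimes\ell}$ inherits a positive definite $\fg$-invariant form, and hence so does $L(\mu)\otimes\C V_\ell$. By Lemma~\ref{inv-forms}, the form induced on $F_{\lambda,\C V_\ell}(L(\mu))=\mathsf C(\Delta_1,\dots,\Delta_n)$ is positive definite and $\bullet$-invariant, which is precisely $\bullet$-unitarity.

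The conclusion about $\bH_M$-semisimplicity for every parabolic Hecke subalgebra $\bH_M$ is then immediate from the generalization of Proposition~\ref{p:unit-ss} recorded in the remark following that proposition: $\bH_M$ is stable under $\bullet$, so every $\bH_M$-invariant subspace admits a $\bH_M$-invariant orthogonal complement with respect to the positive definite $\bullet$-form, producing a semisimple decomposition by induction on dimension. The only slightly delicate bookkeeping point is the reduction in the first paragraph (checking that real central character shifts preserve the $\bullet$-form and that the decomposition across cosets of $\bR/\bZ$ is compatible with the Arakawa--Suzuki realization), but this is a routine consequence of the definition of ``linked'' segments. The conceptual work is concentrated in two already-established inputs, the unitarity of finite-dimensional $gl(n,\bC)$-modules and Lemma~\ref{inv-forms}; once those are combined with the identification in Proposition~\ref{p:char-formula}(1), the rest of the proof is essentially bookkeeping.
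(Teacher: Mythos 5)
Your proposal is correct and follows essentially the same route as the paper: the paper's proof is precisely to apply Lemma~\ref{inv-forms} with $X=L(\mu)$ (finite dimensional, hence carrying a positive definite compact-form invariant hermitian form) and to identify the image via Proposition~\ref{p:char-formula}(1), with the reduction to the integral case having been set up earlier in the discussion of Cherednik's construction. Your additional bookkeeping about cosets of $\bR/\bZ$ and central-character twists only makes explicit what the paper leaves implicit.
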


\begin{proof}
Apply Lemma \ref{inv-forms} with $X=L(\mu)$, where $\mu$ and $\lambda$
are as in Proposition \ref{p:char-formula}(1).
\end{proof}

{\begin{remark}
The $\bH_M$-semisimplicity of ladder representations from Proposition \ref{p:semi} is the Hecke algebra equivalent of the semisimplicity of the Jacquet modules of ladder representations proved in \cite{LM}.
\end{remark}
}

\section{Ladder representations: pairs of commuting nilpotent elements}

We relate the $\bA$-semisimple $\bH$-modules to the geometry of pairs
of commuting nilpotent elements considered by \cite{Gi}. Let $\fg$ be
a complex semisimple Lie algebra and $G=\Ad(\fg)$.

\begin{definition}[\cite{Gi}, \cite{EP}]\label{d:nilpair} A pair $\ue=(e_1,e_2)\in \fg\times\fg$ is called a nilpotent pair if
  $[e_1,e_2]=0$ and for all $(t_1,t_2)\in \bC^\times\times\bC^\times$,
  there exists $g\in G$ such that $\Ad(g)(e_1,e_2)=(t_1 e_1,t_2
  e_2)$. In addition:
\begin{enumerate}
\item $\ue$ is called principal if $\dim\fk z_{\fg}(\ue)=\text{rank}~\fg$;
\item $\ue$ is called distinguished if 
\begin{enumerate}
\item $\fk z_\fg(\ue)$ contains no semisimple elements, and
\item there exists a semisimple pair $\uh=(h_1,h_2)\in \fg\times\fg$
  such that $\ad(h_1),\ad(h_2)$ have rational eigenvalues,
$$[h_1,h_2]=0,\ [h_i,e_j]=\delta_{ij} e_j,$$
and $\fk z_\fg(\uh)$ is a Cartan subalgebra.
\end{enumerate}
\item $\ue$ is called rectangular if $e_1,e_2$ can be embedded in
  commuting $sl(2)$ triples.
\end{enumerate}
\end{definition}

By \cite[Theorem 1.2]{Gi}, every principal nilpotent pair $\ue$ is
distinguished, and in fact, the associated semisimple pair $\uh$ has
the property that the eigenvalues of $\ad h_i$ are integral. 

\subsection{} We summarize some of the results from \cite{Gi}.

\begin{theorem}[{\cite[Theorem 3.7, Theorem 3.9, Corollary 3.6]{Gi}}]
\ 

\begin{enumerate}
\item Any two principal nilpotent pairs $\ue$ and $\ue'$ with the same
  associated semisimple pair $\uh$ are conjugate to each other by the
  maximal torus $T=Z_G(\uh).$
\item There are finitely many adjoint $G$-orbits of principal
  nilpotent pairs.
\item For every principal nilpotent pair $\ue$, the centralizer
  $Z_G(\ue)$ is connected.
\end{enumerate}
\end{theorem}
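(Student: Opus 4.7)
The plan is to exploit the bigrading on $\fg$ induced by the associated semisimple pair. Since $h_1$ and $h_2$ commute and act on $\fg$ with rational (here in fact integral) $\ad$-eigenvalues, and since $\ft:=\fk z_\fg(\uh)$ is a Cartan subalgebra by hypothesis, we obtain a joint $(\ad h_1,\ad h_2)$-eigenspace decomposition
$$\fg=\bigoplus_{(i,j)\in\bZ\times\bZ}\fg(i,j)$$
refining the root space decomposition with respect to $\ft$. The relations $[h_i,e_j]=\delta_{ij}e_j$ place $e_1\in\fg(1,0)$ and $e_2\in\fg(0,1)$, so the entire nilpotent pair lies in the $T$-stable subspace $\fg(1,0)\oplus\fg(0,1)$, where $T:=Z_G(\uh)$ is the maximal torus with Lie algebra $\ft$.

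For part (1), let $\CS(\uh)$ denote the locus of pairs $(x_1,x_2)\in\fg(1,0)\oplus\fg(0,1)$ with $[x_1,x_2]=0$ which are principal nilpotent pairs with associated semisimple pair $\uh$; this is a locally closed $T$-stable subvariety containing both $\ue$ and $\ue'$. The first observation is that the stabilizer $Z_T(\ue)$ is trivial: indeed $Z_T(\ue)\subset Z_G(\ue)$, and the distinguished hypothesis from Definition \ref{d:nilpair} forbids nontrivial semisimple elements in $\fk z_\fg(\ue)$. Combined with the identity $\dim Z_G(\ue)=\rank\fg=\dim T$ from the principal condition, this shows that the $T$-orbit $T\cdot\ue$ already has full dimension $\dim T$. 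The key remaining step is to prove that $\CS(\uh)$ is irreducible of dimension $\dim T$, from which $T\cdot\ue$ is Zariski dense and hence equal to $\CS(\uh)$. The dimension bound should follow by computing the tangent space using the commutation equation $[x_1,x_2]=0$ together with the Jacobson-Morozov-type rigidity imposed by the existence of the commuting semisimple pair; irreducibility of $\CS(\uh)$ is the main technical obstacle.

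Part (2) then follows quickly: the integer eigenvalues of $\ad h_i$ on $\fg$ are bounded by the height of the longest root, so only finitely many $G$-conjugacy classes of commuting semisimple pairs $\uh$ can occur as associated semisimple pairs, and by part (1) each such class contributes exactly one $G$-orbit of principal nilpotent pairs. For part (3), any $g\in Z_G(\ue)$ has Jordan decomposition $g=su$ with $s,u\in Z_G(\ue)$; the distinguished property forces the semisimple part $s$ to be central in $G$, hence trivial since $G=\Ad(\fg)$ is adjoint. Thus $Z_G(\ue)$ is a unipotent algebraic group, and its connectedness can be read off from the $T$-equivariant structure on $\fk z_\fg(\ue)$ induced by the bigrading, which realizes the centralizer as a successive extension of connected unipotent weight subgroups indexed by the bigraded pieces of $\fk z_\fg(\ue)$. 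The main obstacle in the entire argument is the irreducibility and dimension estimate for $\CS(\uh)$ in step (1); once these are in hand, parts (2) and (3) reduce to structural bookkeeping around the bigrading.
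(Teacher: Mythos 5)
First, a point of comparison: the paper does not prove this statement at all --- it is quoted verbatim from Ginzburg \cite{Gi} (his Theorem 3.7, Theorem 3.9, Corollary 3.6) in a section explicitly labelled as a summary of results from that paper, so there is no in-paper argument to measure your sketch against. Your outline does follow the natural (and, roughly, Ginzburg's) strategy for part (1): use the bigrading $\fg=\bigoplus\fg(i,j)$ determined by $\uh$, place $e_1\in\fg(1,0)$ and $e_2\in\fg(0,1)$, and compare the dimension of the $T$-orbit with that of the locus $\CS(\uh)$ of principal pairs adapted to $\uh$. But the decisive step --- that $\CS(\uh)$ is irreducible of dimension $\rank\fg$ --- is exactly what you leave unproved, and it carries essentially all of the content of part (1). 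The dimension estimate rests on a nontrivial identity for the bigrading of a principal pair (the commutator map $\fg(1,0)\times\fg(0,1)\to\fg(1,1)$ must be controlled, which requires $\dim\fg(1,0)+\dim\fg(0,1)-\dim\fg(1,1)=\dim\fg(0,0)$), and irreducibility of the relevant commuting locus is genuinely delicate; neither follows from ``Jacobson--Morozov-type rigidity.'' A smaller slip in the same vein: the absence of semisimple elements in $\fk z_\fg(\ue)$ only shows that the Lie algebra of $Z_T(\ue)$ vanishes, i.e.\ that the stabilizer is \emph{finite}, not trivial; this suffices for the dimension count but not for the literal claim you make.

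Parts (2) and (3) contain analogous unjustified steps. For (2), you assert that the eigenvalues of $\ad h_i$ are bounded by the height of the longest root; for a general integral semisimple element of the Cartan this is simply false (scale it), and the boundedness of the eigenvalues for the associated semisimple pair of a \emph{principal} nilpotent pair is itself one of Ginzburg's theorems, not a formality --- so finiteness of the possible $\uh$ up to conjugacy is not yet established. For (3), the claim that ``the distinguished property forces the semisimple part $s$ to be central'' conflates the group centralizer with the Lie algebra centralizer: a nontrivial semisimple element of \emph{finite order} in $Z_G(\ue)$ contributes nothing to $\fk z_\fg(\ue)$ and is therefore not excluded by Definition \ref{d:nilpair}(2)(a); ruling such elements out requires a genuine argument (Ginzburg's proof of connectedness in fact passes through the $T$-conjugacy statement (1)). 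The final step is fine --- a unipotent algebraic group in characteristic zero is automatically connected --- but unipotence of $Z_G(\ue)$ has not been shown. In short, the architecture of the sketch is reasonable, but each of the three parts still rests on an unproved claim that is comparable in difficulty to the statement itself.
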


The construction of principal pairs is as follows. Let $\fk p=\fk
l\oplus\fk u$ be a parabolic subalgebra and $e_1\in\fk l$ a principal
nilpotent element. Assume that $e_2\in\fk z_{\fk u}(e_1)$ is a
Richardson element for $\fk p.$ Set $\ue=(e_1,e_2)$. The following are
equivalent:
\begin{enumerate}
\item $\ue$ is a principal nilpotent pair.
\item The orbit $\Ad Z_P(e_1)\cdot e_2$ is Zariski open dense in $\fk
  z_{\fk u}(e_1)$.
\end{enumerate}
{Every principal nilpotent pair is of this form. More precisely, for a
given principal pair $\underline{e}$, let
$\uh=(h_1,h_2)$ be the associated semisimple pair. 
Let $\fg=\oplus_{p,q}\fg_{p,q}$ be the
bigradation of $\fg$ defined by the adjoint action of $\uh.$ Define
\begin{equation}
\fg_{p,*}=\oplus_q\fg_{p,q}\quad\text{ and}\quad \fg_{*,q}=\oplus_p\fg_{p,q},
\end{equation}
and the parabolic subalgebras
\begin{equation}
\fk p^{\text{east}}=\oplus_{p\ge 0}\fg_{p,*}\quad\text{and}\quad \fk
p^{\text{south}}=\oplus_{q\ge 0}\fg_{*,q}
\end{equation}
with Levi subalgebras $\fg^1=\fg_{*,0}$ and $\fg^2=\fg_{0,*}$,
respectively. Then $(e_1,e_2)$ are given by the above construction for $\fk p=\fk
p^{\text{south}}$ and $\fk l=\fg^1.$
}

\subsection{} The notation is motivated by the example $\fg=sl(n).$
In this case let $\sigma$ be a Young diagram visualized as in the
following example: 
$$ \yng(4,2,1)$$

Enumerate the boxes $1,2,\dots,n$ in some order and label the basis of
$\bC^n$ by the box with the corresponding number. Let $e_1,e_2\in
\End(\bC^n)$ be defined as follows: 

$e_1$: sends a basis vector corresponding to a box to the vector
corresponding to the next box on the row (to the east) or $0$ if it's
the last row box;

$e_2$: same as $e_1$ except the direction is down (south) on the
columns.

\begin{theorem}[\cite{Gi}] Suppose $\fg=sl(n)$. Every adjoint
  $G$-orbit of principal nilpotent pairs has a representative obtained
  from a Young diagram by the procedure described above. 

\end{theorem}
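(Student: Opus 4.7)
The plan is to translate a principal nilpotent pair in $sl(n)$ into a Young diagram via the joint $\uh$-weight decomposition of the standard representation. Fix a principal pair $\ue=(e_1,e_2)$ with associated semisimple pair $\uh=(h_1,h_2)$. By the earlier Ginzburg theorem $\ad h_1, \ad h_2$ have integer eigenvalues, which (after shifting each $h_i$ by a scalar if necessary) means $h_1, h_2$ act on $\bC^n$ with integer eigenvalues, yielding an integral bigrading
\begin{equation*}
\bC^n=\bigoplus_{(p,q)\in\bZ^2} V_{p,q}, \qquad e_1:V_{p,q}\to V_{p+1,q},\quad e_2:V_{p,q}\to V_{p,q+1}.
\end{equation*}

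The key step is to show that $\bC^n$ is a \emph{cyclic} module over the commutative algebra $A=\bC[e_1,e_2]$. Since $\fz_{gl(n)}(\ue)=\End_A(\bC^n)$ and principality forces $\dim_{\bC}\fz_{gl(n)}(\ue)=n$, we have $\dim_{\bC}\End_A(\bC^n)=\dim_{\bC}\bC^n$; and the fact that $A$ is a commutative local Artinian $\bC$-algebra (both $e_1,e_2$ being nilpotent) then implies that $\bC^n$ is cyclic. Because $A$ preserves the bigrading, after a further shift we can choose a cyclic generator $v_0$ lying in $V_{0,0}$, with $(0,0)$ being the minimal bidegree in the support. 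The vectors $e_1^p e_2^q v_0 \in V_{p,q}$ then span $\bC^n$, so each $V_{p,q}$ has dimension at most one. The support
\begin{equation*}
S=\{(p,q)\in\bZ_{\ge 0}^2 : e_1^p e_2^q v_0\ne 0\}
\end{equation*}
is downward closed: $e_1^{p'}e_2^{q'}v_0=0$ with $(p',q')\le(p,q)$ would force $e_1^p e_2^q v_0=e_1^{p-p'}e_2^{q-q'}(e_1^{p'}e_2^{q'}v_0)=0$, contradicting $(p,q)\in S$. Hence $S$ is the set of boxes of a Young diagram $\sigma$ of size $n$, and the labelling $(p,q)\mapsto e_1^p e_2^q v_0$ identifies $\bC^n$ with the standard model underlying the construction of the theorem, with $e_1, e_2$ realized as the east and south shifts. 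Combined with part (1) of the earlier Ginzburg theorem (any two principal pairs with a given $\uh$ are $T$-conjugate), this exhibits $\ue$ as $G$-conjugate to the pair built from $\sigma$.

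The main obstacle is the cyclicity step: while it is a standard commutative-algebra consequence of the dimension count $\dim_{\bC}\End_A(\bC^n)=\dim_{\bC}\bC^n$, it is the only substantive use of the principality hypothesis, and it requires carefully assembling the identification $\fz_{gl(n)}(\ue)=\End_A(\bC^n)$ together with the local Artinian nature of $A$ and the equivalence of the minimal $\End$-dimension with cyclicity for finite modules over such algebras. Once cyclicity is in hand, the rest of the argument is routine bookkeeping with weights, and $G$-uniqueness of the orbit attached to a given shape follows from the part of the previously cited Ginzburg theorem already invoked.
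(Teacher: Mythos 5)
The paper does not actually prove this statement; it is quoted from \cite{Gi}, so there is no internal proof to compare against. That said, your overall strategy --- pass to the integral bigrading of $\bC^n$ determined by the associated semisimple pair, prove that $\bC^n$ is a cyclic $\bC[e_1,e_2]$-module, and read off the Young diagram as the support of a homogeneous cyclic generator --- is the right shape of argument (it is essentially Ginzburg's identification of principal pairs with torus-fixed points of the punctual Hilbert scheme, i.e.\ monomial ideals), and everything you do after cyclicity is correct.

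The gap is the cyclicity step itself. The implication you rely on --- $A$ commutative local Artinian, $M$ finite and faithful, $\dim_\bC\End_A(M)=\dim_\bC M$, hence $M$ cyclic --- is false. Take $n=3$, $e_1=E_{12}$, $e_2=E_{13}$. Then $A=\bC[e_1,e_2]\cong\bC[x,y]/(x,y)^2$ is local Artinian and acts faithfully, and a direct computation gives $\fz_{gl(3)}(\ue)=\bC I\oplus\bC E_{12}\oplus\bC E_{13}$, of dimension $3=\dim\bC^3$; yet every cyclic submodule $Av$ has dimension at most $2$, so $\bC^3$ is not cyclic (this module is the linear dual of the regular module $A$, and it needs two generators). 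Moreover, this pair passes every test your argument actually invokes: it is a nilpotent pair in the sense of Definition 5.1 (conjugate by $\mathrm{diag}(1,t_1^{-1},t_2^{-1})$), it satisfies $\dim\fz_{sl(3)}(\ue)=2=\rank\,sl(3)$, and it admits an associated semisimple pair $\uh$ with integral adjoint eigenvalues. Its bigraded support is $\{(1,0),(0,1),(1,1)\}$, which is the connected \emph{skew} (non-Young) diagram of shape $(2,2)/(1)$; indeed the pair is conjugate to the skew-diagram pair $(E_{32},E_{31})$ of Theorem 5.6 and is \emph{not} conjugate to the Young-diagram pair $(E_{21},E_{31})$ coming from the partition $(2,1)$ (compare $\dim(\ker e_1\cap\ker e_2)$, which is $1$ for the former and $2$ for the latter). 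So if your argument were complete it would place this pair in a Young-diagram orbit, which it is not. What the dimension count $\dim\End_A(\bC^n)=n$ (or, more directly, the fact that $\fz_\fg(\uh)$ is a Cartan) genuinely gives you is that the joint weight spaces $V_{p,q}$ are at most one-dimensional; it does not force the support to be downward closed, nor the shift maps $V_{p,q}\to V_{p+1,q}$ and $V_{p,q}\to V_{p,q+1}$ to be nonzero whenever source and target are nonzero. Cyclicity --- equivalently, that the support is an honest Young diagram rather than a general connected skew shape --- must be extracted from the finer structural properties of principal pairs established in \cite{Gi}, not from the centralizer-dimension count alone, which is the only way principality enters your proof.
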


The classification of the larger class of distinguished nilpotent
pairs has a similar flavor. Consider $\sigma$ to be a skew Young
diagram, \ie the set difference of two Young diagrams as before with
the same corner. Moreover, assume that $\sigma$ is connected. Define
$\ue=(e_1,e_2)$ as in the Young diagram case, but for the skew diagram
$\sigma.$

\begin{theorem}[{\cite[Theorem 5.6]{Gi}}]\label{t:Gi-dist} The adjoint
  $G$-orbits of 
  distinguished nilpotent pairs are in one to one correspondence, via
  the construction above, with connected skew diagrams $\sigma.$

The rectangular distinguished nilpotent pairs (in the sense of
Definition \ref{d:nilpair}(3)) correspond to rectangular
Young diagrams, {\ie  usual Young diagrams in the shape of rectangles}
(Example \ref{ex:speh}). 
\end{theorem}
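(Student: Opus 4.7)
The plan is to establish the bijection in both directions and then handle the rectangular claim. \emph{Forward direction:} given a connected skew diagram $\sigma$ with $n$ boxes, I label each box by its matrix coordinates $(p,q)$ ($p$ east, $q$ south) and index a basis of $\bC^n$ by the boxes. The operators $e_1,e_2$ are defined as in the statement, and I take the associated semisimple pair $\uh=(h_1,h_2)$ to act diagonally, with $h_i$ multiplying each basis vector by the $i$-th coordinate of its box (shifted to lie in $sl(n)$). The relations $[h_i,e_j]=\delta_{ij}e_j$ and the $\bC^\times\times\bC^\times$-scaling are automatic, and the distinctness of the box coordinates forces $\fk z_\fg(\uh)$ to be exactly the diagonal Cartan. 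The absence of semisimple elements in $\fk z_\fg(\ue)$ is where connectedness of $\sigma$ enters: a diagonal operator in the box basis that commutes with both $e_1$ and $e_2$ must be constant along every east/south edge, hence scalar precisely when $\sigma$ is connected.

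\emph{Reverse direction:} starting from a distinguished pair $\ue$ with associated $\uh$, the bigradation $\fg=\oplus_{p,q}\fg_{p,q}$ has integer weights by the argument behind Ginzburg's principal-pair theorem recalled earlier. Since $\fk z_\fg(\uh)$ is a Cartan, all joint $\uh$-weight spaces on the defining representation $\bC^n$ are one-dimensional, producing $n$ distinct integer pairs $(p,q)$ which I take as the boxes. Because $e_1$ raises the bigrading by $(1,0)$ and $e_2$ by $(0,1)$, the arrows they induce on these boxes fit the east/south pattern, and it remains to check that the resulting configuration is the set difference of two Young diagrams with a common corner. That shape constraint follows from the parabolic description preceding the theorem: $e_1$ is principal in the Levi $\fg^1=\fg_{*,0}$, which pins down the rows at each $q$-level, while the Richardson condition on $e_2$ in $\fk p^{\text{south}}$ pins down the columns; together they force staircase adjacency. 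Connectedness of the resulting diagram is then the contrapositive of the $\fk z_\fg(\ue)$-argument of paragraph one.

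Two distinguished pairs producing the same skew diagram have, up to a choice of Cartan, the same associated $\uh$, hence are $T$-conjugate by part (1) of the theorem of \cite{Gi} quoted in the previous subsection; this settles well-definedness and injectivity of the map from diagrams. For the rectangular claim, if $\sigma$ is an $r\times d$ rectangle I construct $f_1$ by transposing $e_1$ row by row and $f_2$ by transposing $e_2$ column by column, obtaining commuting $sl(2)$-triples $(e_i,h_i',f_i)$ whose semisimple elements agree with the distinguished $h_i$ up to a uniform shift. Conversely, if such commuting triples exist, the joint $sl(2)\times sl(2)$-module structure on $\bC^n$ decomposes into irreducibles of the form $V_r\boxtimes V_d$, and the Cartan condition on $\fk z_\fg(\uh)$ forces a single summand, \ie a single rectangle.

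The main obstacle is the shape-constraint step in the second paragraph: verifying that the bigraded picture assembles into a connected \emph{skew} shape rather than into a more arbitrary planar configuration of boxes with east/south arrows. This is where the interplay of the principal-in-Levi condition on $e_1$, the Richardson condition on $e_2$, and the integrality of the bigradation has to be used simultaneously; the remaining steps are either bookkeeping through the construction or direct appeals to the results of \cite{Gi} stated earlier in the excerpt.
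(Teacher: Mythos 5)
First, a point of comparison: the paper does not prove this statement at all --- it is quoted verbatim from Ginzburg \cite[Theorem 5.6]{Gi} (with related results in \cite{EP}), so your sketch is being measured against the original source rather than against anything in this text. Judged on its own terms, your forward direction is right in outline, but the verification that $\fk z_\fg(\ue)$ contains no semisimple elements is incomplete: you only rule out operators that are \emph{diagonal} in the box basis, whereas a semisimple element of the centralizer need not be diagonal in that basis. The standard repair is to compute the centralizer of $\ue$ in $gl(n)$ explicitly (for a connected skew shape it is spanned by the identity together with nilpotent ``shift'' operators), or to use the Jordan decomposition inside the centralizer and analyze the resulting $e_1,e_2$-stable eigenspace decomposition of $\bC^n$; either way a real step is missing.

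The reverse direction contains the more serious gap, and you have correctly flagged it yourself: nothing in your argument actually forces the configuration of joint $\uh$-weights on $\bC^n$ to assemble into a connected \emph{skew} shape. Worse, the two inputs you propose to use for this step are not available in the form you invoke them. The parabolic description recalled before the theorem (with $e_1$ principal in the Levi $\fg^1$ and $e_2$ Richardson in $\fk p^{\text{south}}$) is stated in the paper only for \emph{principal} nilpotent pairs, not for the larger class of distinguished pairs treated here; and the integrality of the bigradation is likewise asserted (via \cite[Theorem 1.2]{Gi}) only for principal pairs, while Definition \ref{d:nilpair}(2)(b) guarantees merely \emph{rational} eigenvalues for a general distinguished pair. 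Both facts do hold for distinguished pairs in $sl(n)$, but establishing them is essentially the content of Ginzburg's Theorem 5.6, so as written your argument is circular exactly at its crux. Finally, in the rectangular claim the converse step is slightly off: a sum of two summands $V_{r_1}\boxtimes V_{d_1}\oplus V_{r_2}\boxtimes V_{d_2}$ can still have all joint weight spaces one-dimensional (e.g.\ $V_1\boxtimes V_1\oplus V_2\boxtimes V_2$ in $\bC^5$), so it is the absence of semisimple elements in $\fk z_\fg(\ue)$ --- the projection onto a summand, suitably normalized to be traceless, would be such an element --- and not the Cartan condition on $\fk z_\fg(\uh)$ that forces a single summand.
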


\subsection{} We make the connection with ladder
representations. Given an $a\in\bb Z$ and $\sig$ a connected skew
diagram, we associate an integral ladder representation $C(\sig,a)$ as follows.

{Form a skew Young tableau as follows:
the leftmost box of the first row of $\sigma$ gets content
  (the number in the box) $a$,  then the contents increase to
the right and decrease to the left on rows, and stay constant on the
columns. In the following example, $a=2$:
\begin{equation}
\sigma=\young(::\hfil\hfil\hfil,:\hfil\hfil\hfil,\hfil\hfil)\longrightarrow\young(::234,:123,01).
\end{equation}

\newcommand{\ytwo}{{-2}}
\newcommand{\yone}{{-1}}

Suppose
$a_i'$ is the leftmost content in row $i$, while $b_i'$
is the rightmost content. Define the segments:
\begin{equation}\label{Delta-sigma}
\Delta_i=[a_i,b_i],\text{ where }a_i=-(i-1)+a_i'\text{ and } b_i=-(i-1)+b_i'.
\end{equation}
In other words, move the $i$-th row $(i-1)$-units to the left, for
every $i$. In our example,
\begin{equation}
(\Delta_1,\Delta_2,\Delta_3)=([2,4],[0,2],[-2,-1])=\young(::::234,::012,\ytwo\yone).
\end{equation}
\begin{definition}
  \label{d:csiga}
The integral integral ladder representation defiend above will be called
\begin{equation}\label{e:C-sigma}
\mathsf C(\sigma,a):=\langle\Delta_1,\dots,\Delta_r\rangle.
\end{equation}
\end{definition}

}
\

Consider the variety  $\C
B(\ue,\uh)$ of Borel subalgebras of $\fg$ containing the elements 
$(e_1,e_2,h_1,h_2)$. When $\ue$ is distinguished, $\C B(\ue,\uh)$  is 
$0$-dimensional. More precisely, suppose $\fk b\in \C
B(\ue,\uh)$. Since $h_1,h_2\in\fk b,$ also $\fk z_{\fk
  g}(\uh)\subset\fk b.$ As $\ue$ is distinguished, $\fh:=\fk z_{\fk
  g}(\uh)$ is a Cartan subalgebra. This means that every $\fk b\in \C
B(\ue,\uh)$ contains the Cartan subalgebra $\fk h$. Let $W$ be the
Weyl group of $\fh$ in $\fg.$ If $\fk b_0$ is a Borel subalgebra
containg $\fh$ such that $e_1,e_2\in \fk b_0$, then
\begin{equation}\label{e:buv}
\begin{aligned}
\C B(\ue,\uh)=\{w\fk b_0: w\in W(\ue,\fk b_0)\}, \text{ where
}\\W(\ue,\fk b_0)=\{w\in W:~ w^{-1} e_1\in\fk b_0,~w^{-1}
e_2\in\fk b_0\}.
\end{aligned}
\end{equation}
Clearly, if $\fk b_0'=u\fk b_0$ is another Borel subalgebra in $\C
B(\ue,\uh)$, with $u\in W$, then $W(\ue,\fk b_0')=W(\ue,\fk
b_0)u^{-1}.$

\begin{proposition}\label{p:buv} Suppose $\fg=sl(n,\bC)$. Let $\sigma$
  be a connected skew diagram . Let $\ue$ be a distinguished
  nilpotent pair with associated semisimple pair $\uh$, such that
  $\ue$ is attached to $\sigma$ by Theorem
  \ref{t:Gi-dist}.   Let $(\Delta_1,\dots,\Delta_r)$ be the
  multisegment constructed from $\sigma$ by procedure
  (\ref{Delta-sigma}). Then, for every Borel subalgebra $\fk b_0\in \C
  B(\ue,\uh),$ we have
\begin{equation}
W(\ue,\fk b_0)=W(\Delta_1,\dots,\Delta_r) u^{-1},\text{ for some }u\in W,
\end{equation}
where $W(\ue)$, $W(\Delta_1,\dots,\Delta_r)$ are defined in (\ref{e:buv}) and
(\ref{e:W-sigma}), respectively.
\end{proposition}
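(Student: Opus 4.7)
The plan is to choose one convenient Borel $\fk b_0 \in \C B(\ue,\uh)$, identify $W(\ue,\fk b_0)$ with Cherednik's set $W(\Delta_1,\dots,\Delta_r)$ via an explicit combinatorial dictionary, and extend to the remaining Borels by invoking the right-translation observation preceding the proposition: $W(\ue, u\fk b_0) = W(\ue,\fk b_0)\cdot u^{-1}$. Hence it suffices to verify the equality for one specifically chosen $\fk b_0$; the case of general $\fk b_0'$ then follows by composition.

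I would fix the Cherednik labeling from Section \ref{s:cherednik}: enumerate the boxes of $\sigma$ row by row, left to right, and identify them with the standard basis of $\bC^n$ using this labeling. In this basis $e_1$ is a sum of matrix units $E_{k+1,k}$ over east-adjacent pairs $(T_k,T_{k+1})$ and $e_2$ is a sum of matrix units $E_{j,k}$ over south-adjacent pairs $(T_k,T_j)$; both are strictly lower triangular, so taking $\fk b_0$ to be the lower triangular Borel places $\fh$, $e_1$ and $e_2$ in $\fk b_0$, whence $\fk b_0 \in \C B(\ue,\uh)$.

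The condition $w^{-1}e_i\in\fk b_0$ now translates, via the identity $w^{-1}E_{j,k}w=E_{w^{-1}(j),w^{-1}(k)}$, into the combinatorial requirements that $w^{-1}(k)$ be increasing along each row (east direction) and along each column (south direction) of $\sigma$. Equivalently, placing $w^{-1}(k)$ at the box $T_k$ produces a standard skew Young tableau of shape $\sigma$. Comparing with Cherednik's parametrization (\ref{e:W-sigma}) and the construction of the skew diagram $\sigma$ in (\ref{Delta-sigma}), this exactly matches $W(\Delta_1,\dots,\Delta_r)$ up to the convention of whether $w$ or $w^{-1}$ records the filling; combined with the right-translation remark one obtains $W(\ue,\fk b_0)=W(\Delta_1,\dots,\Delta_r)\cdot u^{-1}$ for an appropriate $u\in W$ depending on the chosen $\fk b_0$.

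The main obstacle is verifying the combinatorial identification in the third step: specifically, matching Cherednik's condition (2) on ``$45^\circ$-diagonals'' with the column-increasing condition for the skew shape $\sigma$. This is forced by the construction (\ref{Delta-sigma}), in which two boxes lie in the same column if and only if they correspond to the same integer of the multisegment, so the constant-content diagonals of Cherednik's content filling (his ``$45^\circ$-diagonals'') coincide with the columns of $\sigma$ as a skew Young diagram. Once this geometric identification is in place, the rest is matrix bookkeeping plus the usual conventions about left versus right actions and inversion of permutations.
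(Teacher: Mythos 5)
Your proposal is correct and follows essentially the same route as the paper's proof: reduce to the lower triangular Borel with the diagonal Cartan, translate $w^{-1}e_1\in\fk b_0$ and $w^{-1}e_2\in\fk b_0$ into the row and $45^\circ$-diagonal rules of Cherednik's tableaux (using that the row shifts in (\ref{Delta-sigma}) turn columns of $\sigma$ into diagonals), and handle a general $\fk b_0'$ by the right-translation identity $W(\ue,u\fk b_0)=W(\ue,\fk b_0)u^{-1}$. Your explicit flagging of the $w$ versus $w^{-1}$ convention is a point the paper glosses over, but it is harmlessly absorbed into the factor $u^{-1}$ exactly as you say.
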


\begin{proof}
It is sufficient to prove that if $\fk b_0$ is the lower
triangular Borel subalgebra and $\fh$ is the diagonal Cartan
subalgebra, then $W(\ue,\fk b_0)=W(\Delta_1,\dots,\Delta_r).$
If we assign to the boxes of $\sigma$ the standard basis elements of
$\bC^n$ in row order, \eg the boxes of the first row correspond to
$x_1,x_2,\dots, x_{m_1}$, where $m_1=|\Delta_1|$, \etc, then the
nilpotent element $e_1\in\fk b_0$ is a sum
\begin{equation}
e_1=\sum_{i=1}^r X_i,\quad\text{where }
X_1=E_{21}+E_{32}+\dots+E_{m_1,m_1-1},\quad \text{\etc .} 
\end{equation}
Since $w\cdot E_{ij}=E_{w(i),w(j)},$ for $w\in S_n$, it is clear that
the condition $w^{-1}\cdot  e_1\in \fk b_0$ translates to the same
rule as the ``row rule'' (1) used in defining
$W(\Delta_1,\dots,\Delta_r).$

Similarly, $e_2$ is defined using the columns of $\sigma$. Then the
restrictions imposed by the condition $w^{-1}\cdot e_2\in \fk b_0$ are
the same as the ``$45^\circ$-diagonal rule'' (2) used in the
definition of $W(\Delta_1,\dots,\Delta_r).$ Recall that
$(\Delta_1,\dots,\Delta_r)$ is obtained from $\sigma$ by shifting each
row to the left and therefore the column relations become diagonal relations.
\end{proof}

Proposition \ref{p:buv}  has the following immediate corollary.

\begin{corollary}\label{c:buv} Keep the notation from Proposition
  \ref{p:buv}. For every $a\in\bZ$, the $\bA$-weights of the ladder
  representation {$\mathsf C(\sigma,a)$ defined in (\ref{e:C-sigma})} are in one-to-one
correspondence with the points of the variety $\C B(\ue,\uh)$.
\end{corollary}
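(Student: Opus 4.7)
The plan is to chain together two bijections that are essentially already packaged in Theorem \ref{t:Ch} and Proposition \ref{p:buv}. First, from Cherednik's construction, the ladder representation $\mathsf C(\sigma,a)=\langle\Delta_1,\dots,\Delta_r\rangle$ carries a basis $\{Y_w:w\in W(\Delta_1,\dots,\Delta_r)\}$ of $\bA$-weight vectors with $Y_w$ of weight $w(\lambda)$, where $\lambda$ is as in (\ref{eq:segments}). Thus the $\bA$-weights (with multiplicities) of $\mathsf C(\sigma,a)$ are in canonical bijection with the set $W(\Delta_1,\dots,\Delta_r)$.

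Second, the description (\ref{e:buv}) identifies $\C B(\ue,\uh)$ with a subset of $W$: once we fix any $\fk b_0\in\C B(\ue,\uh)$, the map $w\mapsto w\fk b_0$ is a bijection $W(\ue,\fk b_0)\to\C B(\ue,\uh)$. Now Proposition \ref{p:buv} asserts $W(\ue,\fk b_0)=W(\Delta_1,\dots,\Delta_r)u^{-1}$ for some $u\in W$, so right multiplication by $u$ provides a bijection $W(\ue,\fk b_0)\to W(\Delta_1,\dots,\Delta_r)$.

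Composing the three bijections
$$\C B(\ue,\uh)\;\longleftrightarrow\;W(\ue,\fk b_0)\;\longleftrightarrow\;W(\Delta_1,\dots,\Delta_r)\;\longleftrightarrow\;\{Y_w\}$$
produces the desired one-to-one correspondence, which depends on a choice of $\fk b_0$ only up to a global right translation by $W$. There is no substantive obstacle: the entire combinatorial content is carried by Proposition \ref{p:buv}, which matches the ``row rule'' and ``$45^\circ$-diagonal rule'' defining the tableau basis of the ladder representation with the Borel-containment conditions $w^{-1}e_1\in\fk b_0$ and $w^{-1}e_2\in\fk b_0$ cutting out $\C B(\ue,\uh)$. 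The corollary is simply the packaging of that identification on the level of points.
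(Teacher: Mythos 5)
Your proof is correct and is exactly the argument the paper intends: the corollary is stated as an immediate consequence of Proposition \ref{p:buv}, obtained by composing the bijection $w\mapsto w\fk b_0$ from (\ref{e:buv}) with the weight-basis parameterization $\{Y_w: w\in W(\Delta_1,\dots,\Delta_r)\}$ of Theorem \ref{t:Ch}, just as you describe. Your parenthetical that the weights are counted with multiplicity (i.e.\ one counts the weight vectors $Y_w$) is the right reading of the statement.
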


\ifx\undefined\bysame
\newcommand{\bysame}{\leavevmode\hbox to3em{\hrulefill}\,}
\fi

\end{document}